\pdfoutput=1 % for arXiv to use pdflatex instead of latex+dvipdf

\documentclass[a4paper,twoside,reqno]{article}

\usepackage{a4wide}
\usepackage{amssymb}
\usepackage{amsmath}
\usepackage{mathtools}
\usepackage{amsthm}

\usepackage{mathrsfs}

\usepackage{enumitem}
\setlist[enumerate,1]{label=(\alph*)}
\setlist[enumerate,2]{label=(\roman*)}

\usepackage{url}
\usepackage[hidelinks]{hyperref}

\usepackage[usenames,dvipsnames]{xcolor}
\usepackage{mathtools}
\mathtoolsset{showonlyrefs}

\usepackage{dsfont}

\theoremstyle{definition}

\theoremstyle{remark}

\swapnumbers
\theoremstyle{plain}

\newtheorem{theorem}{Theorem}[section]
\newtheorem*{theorem*}{Theorem}

\newtheorem{lemma}[theorem]{Lemma}

\theoremstyle{remark}
\newtheorem{remark}[theorem]{Remark}
\newtheorem*{remark*}{Remark}

\theoremstyle{definition}
\newtheorem{definition}[theorem]{Definition}

\usepackage{authoraftertitle}

\usepackage[only,llbracket,rrbracket]{stmaryrd}

\newcommand{\End}[1]{ \mathrm{End}({#1}) }

\newcommand{\ccspace}[1]{\mathscr{K}(#1)}

\newcommand{\ograss}[2]{\mathbf{G}_0(#1,#2)}

\newcommand{\PC}[1]{\mathbf{P}_{#1}}

\newcommand{\IC}{\mathbf{I}}

\newcommand{\Mass}{\mathbf{M}}

\ifdefined\textint
    \renewcommand{\textint}[2]{{\textstyle\int_{#1}^{#2}}}
\else    
    \newcommand{\textint}[2]{{\textstyle\int_{#1}^{#2}}}
\fi

\ifdefined\textsum
    \renewcommand{\textsum}[2]{{\textstyle\sum_{#1}^{#2}}}
\else  
    \newcommand{\textsum}[2]{{\textstyle\sum_{#1}^{#2}}}
\fi

\newcommand{\natp}{\mathbb{N}}
\newcommand{\nat}{\natp \cup \{0\}}

\newcommand{\UPC}{\mathrm{UPC}}
\newcommand{\AUE}{\mathrm{AUE}}
\newcommand{\UQC}{\mathrm{UQC}}

\newcommand{\integers}{\mathbf{Z}}
\newcommand{\Z}{\integers}

\newcommand{\R}{\mathbf{R}}

\newcommand{\Q}{\mathbf{Q}}

\newcommand{\LM}{\mathscr{L}}

\newcommand{\HM}{\mathscr{H}}

\newcommand{\restrict}{ \mathop{ \rule[1pt]{.5pt}{6pt} \rule[1pt]{4pt}{0.5pt} }\nolimits }

\newcommand{\ud}{\ensuremath{\,\mathrm{d}}}

\newcommand{\uD}{\ensuremath{\mathrm{D}}}

\newcommand{\id}[1]{\mathds{1}_{#1}}

\newcommand{\tbwedge}{{\textstyle \boldsymbol{\bigwedge}}}

\newcommand{\Dirac}[1]{\boldsymbol{\delta}_{#1}}

\newcommand{\Lbrack}{\llbracket}
\newcommand{\Rbrack}{\rrbracket}

\DeclareMathOperator{\Hom}{Hom}

\DeclareMathOperator{\asssp}{space}

\newcommand{\cnt}[1]{\mathscr{C}^{#1}}

\newcommand{\orthproj}[2]{\mathbf{O}^\ast({#1},{#2})}

\DeclareMathOperator{\spt}{spt}

\DeclareMathOperator{\conv}{conv}

\DeclareMathOperator{\ray}{ray}

\DeclareMathOperator{\cone}{cone}

\DeclareMathOperator{\Lip}{Lip}

\newcommand{\without}{\!\sim\!}
\newcommand{\qspace}[1]{\mathcal{A}_{{#1}}}

\DeclareMathOperator{\im}{im}

\date{\today}

\title{
    Equivalence of Uniform Polyconvexity and Almgren Uniform Ellipticity for 
    Lipschitz $Q$-Graph Test Pairs
}

\author{
    Maciej Lesniak
}

\hypersetup{
  unicode=true,
  pdfauthor={\MyAuthor},
  pdftitle={\MyTitle},
  pdfsubject={},
  pdfkeywords={},
  pdfproducer={},
  pdfcreator={}
  pdfinfo={
    orcid_ML={0009-0002-4657-4213},
  }
}

\begin{document}

\maketitle

\begin{abstract}
    We investigate the relationship between uniform polyconvexity of anisotropic geometric 
    integrands and Almgren's uniform ellipticity. We first establish the converse 
    implication for uniform ellipticity with respect to polyhedral test pairs, 
    thereby strengthening earlier results. Our main theorem shows that uniform 
    polyconvexity is equivalent to Almgren's uniform ellipticity with respect to 
    Lipschitz $Q$-graph test pairs, building on techniques developed by De Rosa, Lei, 
    and Young. As a consequence, we show that for a classical integrand, uniform 
    polyconvexity is equivalent to uniform quasiconvexity of the associated 
    $Q$-integrand for every~$Q \in \natp$.
\end{abstract}

\section{Introduction}
\hfill

In geometric measure theory one is led to the study of minimisers and critical
points of functionals defined on $k$--dimensional currents or varifolds
in~$\R^n$. In particular, if $\ograss{n}{k}$ denotes the Grassmannian of
oriented $k$--planes in $\R^n$, a continuous \emph{geometric integrand}
$\Psi : \ograss{n}{k} \to (0,\infty)$ determines an anisotropic energy
functional
\begin{displaymath}
    E_{\Psi}(T) = \textint{}{} \Psi(\vec T(x)) \ud \|T\|(x) \,,
\end{displaymath}
where $\|T\|$ denotes the variation measure of the rectifiable current $T$ and
$\vec T$ is the associated orientation map; see~\cite[2.5.12, 4.1.5,
4.1.7]{Federer1969}. If $\Psi$ is even, $E_{\Psi}$ extends naturally to
$k$--dimensional varifolds; see~\cite{Allard1972}. Based on Almgren's
notion~\cite[1.2, 1.6(2)]{Almgren1968}, given a family $\mathcal{P}$ of pairs of
$k$-dimensional surfaces in~$\R^n$ we call $\Psi$ \emph{elliptic with respect to
  $\mathcal{P}$} if
\begin{displaymath}
    E_{\Psi}(S) > E_{\Psi}(D)
    \quad \text{for all $(S,D) \in \mathcal{P}$} \,.
\end{displaymath}
At this stage $\mathcal{P}$ may be arbitrary but one should think that $D$ is
always a flat $k$-dimensional disc (or a cube) and $S \ne D$ is a
$k$-dimensional surface attached to the relative boundary of~$D$. In~case there
exists $c > 0$ such that
\begin{displaymath}
    E_{\Psi}(S) - E_{\Psi}(D) \ge c \bigl( \HM^k(S) - \HM^k(D) \bigr)
    \quad \text{for all $(S,D) \in \mathcal{P}$} 
\end{displaymath}
we call $\Psi$ \emph{uniformly elliptic with respect to $\mathcal{P}$}. Uniform
ellipticity is the condition that allowed Almgren to prove full partial
regularity of minimisers of~$E_{\Psi}$ in classes of rectifiable sets and
rectifiable currents with coefficients in an abelian group; see~\cite[1.4,
1.7]{Almgren1968}.

Parallel to this geometric setting, in the classical calculus of variations one
studies minimisers and critical points of functionals of the type
\begin{displaymath}
    E_{\psi}(u) = \textint{\Omega}{} \psi(\uD u(x)) \ud \LM^k(x) \,,
\end{displaymath}
where $\Omega \subseteq \R^k$ is a bounded open set, $u : \Omega \to \R^{n-k}$
is a weakly differentiable map, and $\psi : \Hom(\R^k,\R^{n-k}) \to (0,\infty)$
is a given \emph{classical integrand}; cf.~\ref{def:classical_integrand}.
In~this context one introduces the notions of quasi-convexity, polyconvexity,
and ellipticity of~$\psi$ % in order to be able to prove existence and regularity
% of solutions to variational problems involving~$E_{\psi}$
and these three notions differ; see e.g.~\cite[Chapter~5]{Dacorogna2008}.

The geometric ellipticity condition depends strongly on the choice of~$\mathcal{P}$,
i.e., on the class of~\emph{admissible competitors}. This phenomenon is
described in more detail in the introduction to our previous paper;
see~\cite{Lesniak_2025} and references therein. Here we focus on two particular
choices for~$\mathcal{P}$: \emph{polyhedral test pairs} constructed from
polyhedral currents with real coefficients and \emph{graph test pairs} being
pairs of graphs of certain $Q$-valued functions;
cf.~\ref{def:polyhedral_test_pairs} and~\ref{def:graph_test_pair}.

When competitors in~$\mathcal{P}$ are rectifiable currents, uniformly convex
norms on~$\tbwedge_k\R^n$ induce elliptic integrands (we identify $\ograss nk$
with the set of unit simple $k$-vectors in $\R^n$);
cf.~\cite[5.1.2]{Federer1969}. In this setting Burago and
Ivanov~\cite{Burago2004} proved that semi--ellipticity is equivalent to
polyconvexity. De Rosa, Lei, and Young reproved that theorem
and showed also that polyconvexity of a classical integrand~$\psi$ and
quasiconvexity of the associated Q–integrand $\bar{\psi}_Q$ for every
$Q \in \natp$ are equivalent; cf.~\cite{Rosa2023}.

In our previous paper~\cite[Theorem~5.10]{Lesniak_2025}, it was shown that
Almgren uniform ellipticity with respect to polyhedral $k$-chains implies
uniform polyconvexity of the associated geometric integrand. The first result of
the present work establishes the converse implication, yielding an equivalence
between the two conditions.

\begin{theorem*}[see \ref{thm:main1}]
    Let $\Psi : \ograss{n}{k} \to \R$ be a Lipschitz geometric integrand and let
    $c>0$. Then $\Psi$ is Almgren uniformly elliptic with constant~$c$ with
    respect to polyhedral $k$-chains with real coefficients if and only if
    $\Psi$ is uniformly polyconvex with the same constant $c$.
\end{theorem*}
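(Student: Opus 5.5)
One implication is already available: by \cite[Theorem~5.10]{Lesniak_2025}, Almgren uniform ellipticity with constant~$c$ with respect to polyhedral $k$-chains implies uniform polyconvexity with the same constant~$c$. It therefore remains to show the converse. Assume $\Psi$ is uniformly polyconvex with constant~$c$. We take this to mean that $\Psi - c\,\|\cdot\|$ (with $\|\cdot\|$ the Euclidean mass norm on $\tbwedge_k\R^n$) restricted to $\ograss nk$ admits an extension to a convex, positively $1$-homogeneous function $F:\tbwedge_k\R^n\to\R$. Equivalently, for each $\xi\in\ograss nk$ there is a supporting linear functional $\alpha_\xi\in\tbwedge^k\R^n$ with
\begin{displaymath}
    \alpha_\xi(\xi)=\Psi(\xi)-c
    \quad\text{and}\quad
    \alpha_\xi(\eta)\le \Psi(\eta)-c\,|\eta|
    \quad\text{for all simple } \eta \,.
\end{displaymath}
If the paper's definition is phrased via supporting forms or a calibration-type inequality, the argument below adapts verbatim.

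Fix a polyhedral test pair $(S,D)$. Here $D$ is a flat polyhedral disc with unit orientation $\xi$, and $S=\sum_i a_i\Lbrack P_i\Rbrack$ is a polyhedral chain with real coefficients and $\partial S=\partial D$. First reduce to $a_i\ge 0$ by reorienting cells and to nonoverlapping cells $P_i$ with orientations $\vec P_i\in\ograss nk$. Then
\begin{displaymath}
    E_\Psi(S)=\textsum{i}{}|a_i|\,\Psi(\vec P_i)\,\HM^k(P_i)
    \quad\text{and}\quad
    \HM^k(S)=\Mass(S)=\textsum{i}{}|a_i|\,\HM^k(P_i) \,.
\end{displaymath}
We take $\HM^k(S)$ to mean mass, as the paper's definition of polyhedral test pairs presumably specifies. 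The key step is that the constant form $\alpha_\xi$ is closed, so since $S-D$ is a cycle, hence a boundary of a polyhedral $(k+1)$-chain in $\R^n$, Stokes gives $S(\alpha_\xi)=D(\alpha_\xi)$. Applying the pointwise inequality cell by cell then yields
\begin{displaymath}
    E_\Psi(S)-c\,\Mass(S)\ \ge\ \textsum{i}{}|a_i|\,\HM^k(P_i)\,\alpha_\xi(\vec P_i)
    \ =\ S(\alpha_\xi)\ =\ D(\alpha_\xi)
    \ =\ \bigl(\Psi(\xi)-c\bigr)\HM^k(D)
    \ =\ E_\Psi(D)-c\,\HM^k(D) \,.
\end{displaymath}
This is exactly the uniform ellipticity inequality with constant~$c$.

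The main obstacle is matching the paper's precise definitions. Uniform polyconvexity may be stated with $c$ in a form slightly different from ``$\Psi-c$ has a convex extension,'' or through an inequality of the form $\Psi(\eta)\ge \alpha(\eta)+c|\eta|$ only for some supporting $\alpha$ at each point. I expect the existence of a supporting linear functional at every $\xi$ to follow from convexity and homogeneity of the extension, possibly using the Lipschitz hypothesis to ensure finiteness and subdifferentiability on all of $\tbwedge_k\R^n$. A second point to verify is that the polyhedral test pairs of \ref{def:polyhedral_test_pairs} have $S-D$ a polyhedral cycle with real coefficients, so that the Stokes step applies. If $S$ may have coefficients whose cells overlap with cancellation, I would replace $S$ by its canonical nonoverlapping representative. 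This does not change $E_\Psi$ or $\Mass$ as computed from $\|S\|$, and the inequality above only uses the mass representation.
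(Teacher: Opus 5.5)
Your converse is correct, but it is the dual (calibration) form of the paper's argument rather than the same argument. The paper applies your Stokes step to all constant $k$-forms at once, i.e.\ $\int \vec S \ud\|S\| = \int \vec D \ud\|D\| = \Mass(D)\,\eta_0$ with $\Mass(D)=1$. For a nonoverlapping representation with $a_i>0$ this says $\eta_0=\sum_i a_i\HM^k(P_i)\,\vec P_i$. That is literally an admissible decomposition in Definition~\ref{def:upolyconvexity}, and the defining inequality of $\UPC(c)$ then reads exactly $E_\Psi(S)-E_\Psi(D)\ge c\,(\Mass(S)-\Mass(D))$. No convex extension or subgradient is needed.

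You instead test against a single form $\alpha_\xi$. This requires that $\Psi-c|\cdot|$ admit a sublinear extension with a supporting functional at $\xi$. The paper defines $\UPC(c)$ by the finite Jensen-type inequality, so this premise must be derived, not assumed. The derivation is short. Put $F(x)=\inf\{\sum_i m_i(\Psi(\eta_i)-c) : x=\sum_i m_i\eta_i,\ m_i>0,\ \eta_i\in\ograss nk\}$. Then:
\begin{itemize}
\item $F<\infty$, because $\ograss nk$ contains $\pm e_{i_1}\wedge\cdots\wedge e_{i_k}$;
\item $F$ is positively homogeneous and subadditive;
\item $\UPC(c)$ gives $F=\Psi-c$ on $\ograss nk$;
\item $F>-\infty$ everywhere, since $F(x)=-\infty$ would force $F(\eta)\le F(x)+F(\eta-x)=-\infty$ for $\eta\in\ograss nk$.
\end{itemize}
Any subgradient $\alpha_\xi$ of $F$ at $\xi$ then satisfies $\alpha_\xi(\xi)=F(\xi)$ and $\alpha_\xi\le F$, by homogeneity. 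The Lipschitz hypothesis plays no role here.

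As for what each route buys: the paper's is shorter and uses the definition verbatim. It works only because the Gaussian image of a polyhedral chain is a finite sum of Dirac masses. Your pointwise inequality $\alpha_\xi\le\Psi-c|\cdot|$ integrates against an arbitrary Gaussian image. It therefore carries over unchanged to rectifiable competitors with $\partial S=\partial D$.

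For the forward implication, do not attribute the sharp constant to \cite[5.10]{Lesniak_2025} alone. The paper combines it with Lemma~\ref{lemma:upc_upc}. Like Theorem~\ref{thm:main2}, that result only yields $\UPC(c_2)$ for every $c_2<c$. One then lets $c_2\uparrow c$ in the finitely-many-term defining inequality to obtain $\UPC(c)$. You need to add this limiting step.
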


Our second result establishes an analogous relation for the uniform counterparts
with respect to $Q$--graph test pairs.

\begin{theorem*}[see \ref{corollary:thm2}]
    Let $\Psi : \ograss{n}{k}^{+} \to \R$ be a Lipschitz geometric integrand and
    let $c>0$. Then $\Psi$ is Almgren uniformly elliptic with respect to
    $Q$--graph test pairs if and only if $\Psi$ is uniformly polyconvex (both
    with the same constant $c$).
\end{theorem*}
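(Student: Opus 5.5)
We prove both implications using the polyconvex representation of $\Psi$ and a calibration-type argument, in the spirit of De Rosa, Lei, and Young. Uniform polyconvexity with constant $c$ means, in the convention of~\cite{Lesniak_2025}, that $\Psi - c\,\|\cdot\|$ is the restriction to $\ograss{n}{k}^{+}$ of a convex, positively $1$-homogeneous function $\Phi$ on $\tbwedge_k\R^n$. Equivalently, for every $\xi \in \ograss{n}{k}^+$ there is a linear form $\ell_\xi \in \tbwedge^k\R^n$ with $\ell_\xi(\xi) = \Psi(\xi) - c$ and $\ell_\xi(\eta) \le \Psi(\eta) - c$ for all $\eta \in \ograss{n}{k}^+$.

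\textbf{Uniform polyconvexity implies uniform ellipticity.} Let $(S,D)$ be a $Q$-graph test pair. Then $D = Q\Lbrack P\Rbrack$ for a flat $k$-cube $P$ with orientation $\xi$, and $S$ is the graph current of a Lipschitz $Q$-valued map $u$ that agrees with the constant configuration on $\partial P$. The key fact is that $\partial S = \partial D$ and $S - D$ is a boundary, since $S$ is homologous to $D$ rel boundary through the affine homotopy of graphs. Consequently $S(\omega) = D(\omega)$ for every constant $k$-form $\omega$. Taking $\omega = \ell_\xi$ gives
\[
\textint{}{} \bigl(\Psi(\vec S) - c\bigr) \ud\|S\| \ \ge\ S(\ell_\xi) = D(\ell_\xi) = \textint{}{} \bigl(\Psi(\xi) - c\bigr) \ud\|D\| ,
\]
which is exactly $E_\Psi(S) - E_\Psi(D) \ge c\bigl(\Mass(S) - \Mass(D)\bigr)$. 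One point needs care: the orientation of the graph current lies in $\ograss{n}{k}^+$, i.e. it projects positively onto $\xi$. This is why only $\ograss{n}{k}^+$ enters, and it is why the form bound over that set suffices. The argument follows the same pattern as the polyhedral case~\ref{thm:main1}.

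\textbf{Uniform ellipticity implies uniform polyconvexity.} Here I would reduce to~\ref{thm:main1}, or to~\cite[Theorem~5.10]{Lesniak_2025}, by showing that the relevant polyhedral test pairs used there can be approximated by $Q$-graph test pairs. The proof of~\cite[5.10]{Lesniak_2025} needs only chains of the form $\sum_i \lambda_i \Lbrack P_i\Rbrack$ with $\sum_i\lambda_i\xi_i = \xi$, where the $\xi_i \in \ograss{n}{k}^+$ and the $\lambda_i$ are rational after approximation. Assuming this, the required inequality is
\[
\textsum{i}{}\lambda_i\bigl(\Psi(\xi_i) - c\bigr) \ \ge\ \Psi(\xi) - c \quad \text{whenever } \textsum{i}{}\lambda_i\xi_i = \xi,\ \xi_i \in \ograss{n}{k}^+ ,
\]
which gives convexity of the $1$-homogeneous extension of $\Psi - c$ on the cone generated by $\ograss{n}{k}^+$. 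To realise such a combination, with $\lambda_i = m_i/Q$, I would use the De Rosa--Lei--Young construction. One builds a Lipschitz $Q$-valued map on the cube whose sheets are, on most of the domain, affine with gradients $L_i$ taken with multiplicity $m_i$ (where $\xi_i$ is the graph orientation of $L_i$). The sheets are glued near the boundary in a thin layer so that they match the constant configuration there. The compatibility condition $\sum_i \lambda_i \xi_i = \xi$ (all minors averaging correctly) is exactly what allows this, via a laminate or periodic construction with averaged affine boundary data. Lipschitz continuity of $\Psi$ then controls the error coming from the boundary layer, which tends to $0$ as the layer shrinks. Applying ellipticity and passing to the limit yields the inequality above. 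Rational $\lambda_i$ pass to real ones by density and continuity.

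\textbf{Main obstacle.} The hardest step is the last one: constructing $Q$-valued Lipschitz graphs that realise an arbitrary combination of minors with matching boundary values and vanishing error mass. I would first try to import this directly from~\cite{Rosa2023}, which proves that quasiconvexity of $\bar\psi_Q$ for all $Q$ implies polyconvexity by the same construction. It then remains to track the constant $c$ through the mass terms, which works because $\Mass$ is itself the integrand with $\Psi \equiv 1$ and therefore passes through the same limits.
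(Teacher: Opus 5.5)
\paragraph{The direction ``uniform polyconvexity $\Rightarrow$ ellipticity''.} This half is the paper's argument in dual form. The paper applies a Jensen inequality for $\Psi-c|\cdot|$ to $\int\vec S\ud\|S\|=\Mass(D)\eta_0$. You instead pair $S$ and $D$ with a supporting constant form, which is the cleaner way to get Jensen for an integral over a non-polyhedral $S$, and you rightly note that $\vec S$ must be positively oriented.

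Like the paper, which only says ``repeat the argument of~\ref{thm:main1}'', you treat a single flat sheet $D=Q\Lbrack P\Rbrack$. That is the right reading. Definition~\ref{def:graph_test_pair} literally allows multi-sheeted $h=\sum_jQ_j\llbracket a_j+L_jx\rrbracket$, and for those neither argument applies. For instance, take $k=1$, $n=2$, $h(x)=\llbracket x\rrbracket+\llbracket 1-x\rrbracket$, $f\equiv\llbracket 0\rrbracket+\llbracket 1\rrbracket$ and $\Psi=|\cdot|$. Then $\Psi$ lies in $\UPC^{+}(c)$ for all $c\le 1$, yet the pair violates the ellipticity inequality for every $c<1$.

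\paragraph{The converse: where the reduction breaks.} Your converse follows the paper's skeleton in Theorem~\ref{thm:main2}: take a violating combination, build a De Rosa--Lei--Young competitor, read it as $Q^{-1}G_f$, test it against $h=Q\llbracket 0\rrbracket$, and let the error go to zero. But the reduction to~\cite{Rosa2023} does not go through as you set it up, and the step you treat as routine is exactly where the paper has to work.

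Theorem~\ref{thm:antonio} accepts arbitrary positive real weights. What it needs is that the planes have rational slope, $\eta_i\in Q(n,k)$, are positively oriented with respect to $\eta_0$, and satisfy $\sum_i m_i\eta_i=\eta_0$ \emph{exactly}. The integer $Q$ of Lemma~\ref{lemma:antonio} comes from clearing the denominators of the rational coefficients of $B_\varepsilon$, not from the weights. So making the $\lambda_i$ rational and returning ``by density and continuity'' addresses the wrong variable.

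What is missing is a way to replace arbitrary $\eta_0,\dots,\eta_d$ by rational-slope unit simple $k$-vectors while keeping both the exact identity and the positive orientation. A naive perturbation breaks the identity, and $\tilde\eta_0$ itself must remain a unit simple $k$-vector. The paper's Lemma~\ref{lemma:approximate} handles this in three moves:
\begin{enumerate}
  \item it perturbs the planes to rational ones;
  \item it arranges that the residual $r=\tilde\eta_0-\sum_{i\le d}m_i\tilde\eta_i$ has positive component along $\tilde\eta_0$;
  \item it writes $r$ as a positive combination of at most $\binom nk$ further rational planes, positively oriented with respect to $\tilde\eta_0$ and of small total mass.
\end{enumerate}
The energy of these extra planes is absorbed as an $O(\varepsilon\sup\Psi)$ error, and $\Lip\Psi$ controls the perturbations $|\Psi(\tilde\eta_i)-\Psi(\eta_i)|$.

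\paragraph{The construction sketch.} Taken literally, it would also fail. An affine sheet $x\mapsto a_i+L_ix$ with $L_i\neq 0$ stays at distance of order $|L_i|$ from the boundary datum $0$ on much of $\partial P$. Gluing it in a collar of width $\delta$ therefore forces slopes of order $|L_i|/\delta$ and costs mass bounded below independently of $\delta$.

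A laminate would instead need rank-one connections. For $Q=1$ the minor identities do not in general suffice for any gradient construction, since quasiconvexity does not imply polyconvexity. The De Rosa--Lei--Young competitor instead exploits the rational slopes to build a fine-scale structure that stays close to $D$. Its error is small in the total variation of the Gaussian image and is paid for by $\sup\Psi$, not by $\Lip\Psi$.
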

The preceding results suggest that uniform polyconvexity is a natural condition 
underlying ellipticity in both the classical and multigraph settings.
This perspective motivates the study of analogous conditions in the
multiple--valued framework introduced by Almgren for the study of
area--minimizing currents; see~\cite{Almgren2000e}. This leads to considering
$Q$-valued maps $v : T \to \qspace{Q}(T^{\perp})$ and related $Q$-energies,
where $Q$ is a~fixed integer and $T$ is a~$k$-dimensional subspace of~$\R^n$;
cf.~\cite{Almgren2000e, De_Lellis_2011_Q_valued_functions_revisited,
  delellis2013multiplevaluedfunctionsintegral}. Given a geometric integrand
$\Psi$ and $Q \in \natp$ one can define the associated classical integrand
$\psi$ and also a~$Q$-integrand $\bar{\psi}_Q$ so that
$E_{\Psi}(S) = E_{\psi}(u)$ if $S$ is the graph of $u$ and
$E_{\Psi}(R) = E_{\bar{\psi}_Q}(v)$ if $R$ is the $Q$-graph of $v$;
cf.~\S{\ref{section:q_energies}}.

In~\cite{De_Lellis_2011_Q_valued_functions_revisited}, De Lellis and Spadaro
revisited Almgren's theory and developed a self--contained analytic framework
for Dir--minimizing $Q$--valued functions. Further developments were obtained in
\cite{delellis2013multiplevaluedfunctionsintegral}, where it was shown that a
Lipschitz multiple--valued map naturally defines an integer rectifiable current,
and explicit formulas for the boundary, the mass, and the first variations were
derived.

In \cite{DeLellis2011a}, De Lellis, Focardi, and Spadaro introduced a notion of
quasiconvexity for $Q$--integrands. This condition extends Morrey's classical
quasiconvexity to energies defined on Sobolev spaces of multiple--valued maps
and provides a necessary and sufficient condition for weak lower semicontinuity
of the associated functional.

A natural question is whether minimizers of such functionals exhibit regularity
properties. In \cite{Spadaro2015}, Spadaro asked whether partial regularity
results can be obtained for minimizers of quasiconvex functionals in spaces of
$Q$--valued maps, and whether there exists a distinguished subclass of
integrands for which such regularity holds.

The present work contributes to this program by identifying a structural
condition on the classical integrand—uniform polyconvexity—that is equivalent to
uniform quasiconvexity of the associated $Q$--integrands (see
Theorem~\ref{thm:main3}). By classical theory, uniform quasiconvexity ensures
weak lower semicontinuity of the corresponding energy functional. Thus, the
third result of the present work provides a foundation for studying existence,
and potentially regularity, of minimizers within the class of integrands
considered in the theorem below.

\begin{theorem*}[\protect{see~\ref{thm:main3} for the precise statement}]
    Let $\Psi : \ograss{n}{k}^{+} \to \R$ be a Lipschitz geometric integrand,
    and let $\psi$ be the associated classical integrand. For each $Q \in \natp$,
    let $\bar{\psi}_Q$ denote the corresponding $Q$-integrand. Then, for any
    constant $c>0$, $\psi$ is uniformly polyconvex if and only if $\bar{\psi}_Q$
    is uniformly quasiconvex for every $Q \in \natp$.
\end{theorem*}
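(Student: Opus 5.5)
We will reduce the statement to the second theorem (\ref{corollary:thm2}) together with the dictionary between geometric and analytic energies from \S\ref{section:q_energies}. The key identity is $E_{\Psi}(R) = E_{\bar{\psi}_Q}(v)$ whenever $R$ is the $Q$-graph of a Lipschitz $Q$-valued map $v : T \to \qspace{Q}(T^{\perp})$. We will apply it in two cases: to the test surface $S$, the $Q$-graph of $A + \varphi$ with $\varphi$ Lipschitz and compactly supported in the unit cube $\Omega \subseteq T$; and to the flat disc $D$, the $Q$-graph of the constant-gradient map $A$ taken with multiplicity, i.e.\ $Q$ copies of the graph of the linear map $\sum_i \Lbrack A_i \Rbrack$ over~$\Omega$.

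Uniform quasiconvexity of $\bar{\psi}_Q$ with constant $c$ should read
\[
\textint{\Omega}{} \bar{\psi}_Q(\uD v) \ud \LM^k - \LM^k(\Omega)\,\bar{\psi}_Q(A) \ge c\Bigl( \textint{\Omega}{} \bar{a}_Q(\uD v) \ud \LM^k - \LM^k(\Omega)\,\bar{a}_Q(A)\Bigr),
\]
where $\bar a_Q$ is the $Q$-area integrand. In other words, it is $(\bar{\psi}_Q - c\,\bar a_Q)$-quasiconvexity. Through the identity above, and the analogous identity for $\HM^k$ (mass with multiplicity), this inequality is literally $E_\Psi(S)-E_\Psi(D) \ge c(\HM^k(S)-\HM^k(D))$ for the $Q$-graph test pair $(S,D)$. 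The first step is therefore to check carefully that the definition of $Q$-graph test pairs in~\ref{def:graph_test_pair} ranges over exactly these pairs: a Lipschitz $Q$-valued perturbation agreeing with an affine $Q$-point $A = \sum_i \Lbrack A_i\Rbrack$ near $\partial\Omega$, with the reference $D$ the corresponding union of flat pieces. If $D$ is required to be a single flat disc, I will restrict to $A$ with all $A_i$ equal. In that case the general $A$ has to be handled through the definition of $Q$-quasiconvexity in~\cite{DeLellis2011a}, which only tests at affine $Q$-points; this may force a splitting argument, decomposing $\Omega$ according to which sheets coincide.

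With this dictionary the proof splits into two directions. ($\Rightarrow$) If $\psi$ is uniformly polyconvex, then $\Psi$ is uniformly polyconvex by the correspondence between $\Psi$ and $\psi$. By~\ref{corollary:thm2}, $\Psi$ is uniformly elliptic for $Q$-graph test pairs for every $Q$. Translating back gives uniform quasiconvexity of each $\bar{\psi}_Q$. ($\Leftarrow$) If every $\bar\psi_Q$ is uniformly quasiconvex with constant $c$, then $\Psi$ is uniformly elliptic for $Q$-graph test pairs for all $Q$. The converse direction of~\ref{corollary:thm2} then yields uniform polyconvexity of $\Psi$, hence of $\psi$, with the same~$c$. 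Here we must confirm that ``for every $Q$'' is what \ref{corollary:thm2} uses; I expect it to be essential, as in De Rosa--Lei--Young, since large $Q$ is needed to realise convex combinations of $k$-vectors with rational weights.

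The main obstacle is the bookkeeping in the dictionary. The associated classical integrand is $\psi(L) = \Psi(\vec{G}(L))\,\|\tbwedge_k(\mathrm{id}\oplus L)\|$, and $\bar\psi_Q(\sum\Lbrack L_i\Rbrack)=\sum_i\psi(L_i)$. We must ensure that the orientation convention (the use of $\ograss{n}{k}^{+}$) is consistent with the graphs. We must also check that uniform polyconvexity of $\psi$ (that $\psi - c\,a$ is a convex function of the minors, with $a$ the area integrand) matches the geometric notion with the same constant. This requires that $c\,a$ correspond exactly to $c$ times the mass norm on graph $k$-vectors, which holds because $a(L)=\|\tbwedge_k(\mathrm{id}\oplus L)\|$. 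Once these identifications are verified, the theorem follows formally.
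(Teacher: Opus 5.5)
Your outline follows the paper's chain: translate $\psi\leftrightarrow\Psi$, pass through a geometric ellipticity/polyconvexity equivalence, then use the area-formula dictionary of Remark~\ref{rem:aue_uqc}. However, the middle link you cite delivers neither implication. Theorem~\ref{corollary:thm2} relates $\AUE(\mathcal{P}_G,c)$ and $\UPC^{+}(c)$, and both are quantified over \emph{all} base planes. The $\bar\psi_Q$, by contrast, only see $Q$-graphs over the fixed plane $\R^k=\asssp\xi_0$. So uniform quasiconvexity for all $Q$ corresponds only to $\Psi\in\AUE(\mathcal{P}_G(\xi_0),c)$, and uniform polyconvexity of $\psi$ is by definition only $\Psi\in\UPC_{\xi_0}(c)$ (Remark~\ref{rem:class_geom_poly_equivalence}). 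Since $\Psi$ is only defined on $\ograss{n}{k}^{+}$, the global classes are not even meaningful here. In ($\Leftarrow$) you cannot upgrade ellipticity over $\R^k$ to ellipticity over other planes by rotating, because $\Psi$ is anisotropic. In ($\Rightarrow$) you cannot upgrade $\UPC_{\xi_0}(c)$ to $\UPC^{+}(c)$. The missing link is the single-plane statement $\Psi\in\AUE(\mathcal{P}_G(\xi_0),c)\iff\Psi\in\UPC_{\xi_0}(c)$, which is exactly what the paper inserts (Remark~\ref{rem:plane_wise_equivalence}). Even its half $\AUE(\mathcal{P}_G(\xi_0),c)\Rightarrow\UPC_{\xi_0}(c)$ is not a verbatim copy of Theorem~\ref{thm:main2}, whose test pair lives over $\asssp\tilde\eta_0$ for a rational approximation $\tilde\eta_0$ of $\eta_0$. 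You must redo the approximation keeping the base cube in $\asssp\xi_0$, which works because $\xi_0$ is itself rational.

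Second, your reading of uniform polyconvexity of $\psi$ (``$\psi-c\,a$ is convex in the minors'') is not the paper's definition. The paper's definition is the Jensen-type inequality only for decompositions $\xi_0=\sum_i m_i\eta_i$ of the single point $\xi_0$. The difference matters for the sloped data $h=Q\Lbrack a+Lx\Rbrack$ that you rightly include: the Jensen argument of Theorem~\ref{thm:main1} then needs the inequality at $\tbwedge M(L)/|\tbwedge M(L)|$ for decompositions into graph planes over $\R^k$. With the paper's definition this is genuinely missing. Take $n=2$, $k=1$, $\Psi(x,y)=x+|y|$; then $\Psi\in\UPC_{e_1}(1)$ because $|\sin\theta|\ge 1-\cos\theta$ for $|\theta|<\pi/2$. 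Yet for $\psi(X)=1+|X|$, $h(x)=x$ and $f$ piecewise linear on $[0,1]$ with slopes $\tfrac12,\tfrac32$, one gets $E_{\bar\psi_1}(f)=E_{\bar\psi_1}(h)=2$ while $\Mass(G_f)>\Mass(G_h)$. Hence $\bar\psi_1\notin\UQC(c;1)$ for every $c>0$, and Remark~\ref{rem:plane_wise_equivalence} passes over this. Under your stronger reading, ($\Rightarrow$) is just Jensen for the convex, positively homogeneous function $\Psi-c|\cdot|$ on $(\tbwedge_k\R^n)^+$, applied to each $f_j$ separately. The splitting $f=\sum_j f_j$ is built into the definition of $\UQC$, and $\bar\psi_Q$ and the mass are additive, so no decomposition of $\Omega$ by coinciding sheets is needed. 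But then ($\Leftarrow$) must yield the Jensen inequality at every graph point $\tbwedge M(L)$, i.e.\ the De Rosa--Lei--Young construction has to be run with sloped boundary data, not only at $\xi_0$.
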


Finally, we remark that in
\cite{degennaro2025nonpolyconvexqintegrandslowersemicontinuous} it was shown
that, for each fixed $Q \in \natp$, there exists a $Q$--integrand $\bar{\psi}_Q$
whose associated energy functional is lower semicontinuous in $W^{1,p}$,
although the corresponding classical integrand $\psi$ is not polyconvex. This
result shows that lower semicontinuity for a fixed $Q$ does not force
polyconvexity of the underlying classical integrand, highlighting the importance
of structural conditions that hold uniformly with respect to $Q$.

This paper is organised as follows. In Section~\ref{section:Preliminaries}, we
recall the basic terminology used throughout the
paper. Section~\ref{section:geom_integrands} introduces uniform polyconvexity
and uniform ellipticity with respect to polyhedral test pairs for geometric
integrands, and establishes the equivalence between these notions. In
Section~\ref{section:class_integrands}, we define uniform polyconvexity for
classical integrands. Sections~\ref{section:q_valued_functions} and
\ref{section:q_graphs} present the framework of multiple-valued functions and
the associated theory needed for the subsequent results. In
Section~\ref{section:ellipticity}, we define ellipticity with respect to
$Q$--graph test pairs, and Section~\ref{section:main_result} contains the proof
of the main result. Finally, Sections~\ref{section:q_energies} and
\ref{section:uniform_quasiconvexity} introduce $Q$--integrands and uniform
quasiconvexity, culminating in the proof of the third result.

\section{Preliminaries}\label{section:Preliminaries}

In~principle we follow the notation of Federer;
see~\cite[pp.~669--671]{Federer1969}. However, we use $\nat$ to denote the set
of positive integers. We use letters~$k$ and~$n$ to denote two integers
satisfying $0 \le k < n$. We say that a~function $f$ defined on some open subset
of a~Banach space with values in another Banach space is of class~$\cnt{l}$ if
it is $l$-times differentiable and $\uD^l f$ is~continuous;
cf~\cite[3.1.11]{Federer1969}.

\begin{definition}
    Let $\xi \in \tbwedge_k \R^n$. We define the vector space associated with
    $\xi$ by
    \begin{displaymath}
        \asssp \xi = \R^n \cap \bigl\{ v : v \wedge \xi = 0 \bigr\} \,.
    \end{displaymath}
\end{definition}

\begin{remark}[\protect{\cite[1.6.1]{Federer1969}}]
    An element $\xi \in \tbwedge_k \R^n$ is simple if and only if
    $\dim \asssp \xi = k$.
\end{remark}

\begin{definition}[\protect{\cite[3.2.28(b)]{Federer1969}}]
    We define the \emph{oriented Grassmannian $\ograss{n}{k}$} as
    \[
        \ograss{n}{k} = \tbwedge_k \R^n \cap \bigl\{ \xi : \xi \text{ is simple}
        ,\, |\xi| = 1 \bigr\} \,.
    \]
\end{definition}

\begin{remark}
    Let $\xi \in \tbwedge_k \R^n$. Recall from~\cite[1.8.1]{Federer1969} that
    $\|\xi\| = |\xi|$ if and only if $\xi$ is simple.
\end{remark}

\begin{definition}
    Given a vectorspace $X$ we write $\End{X}$ for the space of all linear maps
    mapping~$X$ to~$X$, i.e., $\End{X} = \Hom(X,X)$.
\end{definition}

\begin{definition}[\protect{\cite[1.7.4]{Federer1969}}]
    A linear map $j \in \Hom(\R^k,\R^n)$ is called an~\emph{orthogonal
    injection} if $j^* \circ j = \id{\R^k}$. If $p \in \Hom(\R^n,\R^k)$ and
    $j = p^*$ is an orthogonal injection, then we say that $p$ is
    an~\emph{orthogonal projection}. The set of all orthogonal projections
    $\R^n \to \R^k$ is denoted $\orthproj nk$.
\end{definition}

\begin{definition}[\protect{cf.~\cite[p.~14]{Rockafellar1970}}]
    Let $X$ be a vectorspace and $A \subseteq X$. We define
    \[
        \ray A = X \cap \{ ta : 0 \le t < \infty ,\, a \in A \}
        \quad \text{and} \quad
        \cone A = \conv (\ray A) \,.
    \]
\end{definition}

\begin{definition}
    Let $X$ be a normed vectorspace and $f : X \to \R$. We say that $f$ is
    \emph{positively homogeneous} if $f(tx) = t f(x)$ whenever $x \in X$ and
    $0 \le t < \infty$.
\end{definition}

\begin{definition}
    Let $X$ be a vectorspace and $f : X \to \R$. We say that $f$ is
    a~\emph{gauge} if it is convex non-negative and positively homogeneous.
    We~say that $f$ is a~\emph{strict gauge} if $f$ is a gauge and
    \[
        f(x+y) < f(x) + f(y)
        \quad \text{for all $x,y \in X \without \{0\}$ such that $x \notin \ray \{y\}$} \,.
    \]
\end{definition}

\begin{definition}[\protect{cf.~\cite[2.5.19]{Federer1969}}]
    Assume $X$ is a set and $x \in X$. The \emph{Dirac measure} over~$X$ having a~single
    atom at~$x$ is denoted by
    \[
        \Dirac{x}(A) = 
        \begin{cases}
          1 \text{\quad if \quad} x \in A \\
          0 \text{\quad if \quad} x \notin  A
        \end{cases}
        \quad \text{for $A \subseteq X$} \,.
    \]
\end{definition}

\begin{definition}
    A \emph{signed measure} over~$\ograss nk$ is a Daniell integral on the
    lattice of functions~$\ccspace{\ograss nk}$ as defined
    in~\cite[2.5.6]{Federer1969}.
\end{definition}

\begin{definition}[\protect{\cite[2.5.5]{Federer1969}}]
    \emph{Total variation} of a signed measure~$\mu$ over $\ograss nk$ is
    defined as
    \[
        \|\mu\|_{\mathrm{TV}} := |\mu|(\ograss{n}{k}), \quad \text{where} \quad
        |\mu| = \mu^{+}+\mu^{-} \,.
    \]
\end{definition}

\begin{definition}[\protect{cf. \cite[Ch.~1]{Rosa2023}}]
    Let $T$ be a $k$-rectifiable current represented by $(M,\tau,\Theta)$.
    The \emph{weighted Gaussian image} of $T$ is the measure $\gamma_T$ on
    $\ograss{n}{k}$ defined by
    \[
        \gamma_T := \tau_{\#}\bigl(\Theta\, \HM^k \restrict M\bigr).
    \]
\end{definition}

\section{Geometric Integrands}\label{section:geom_integrands}

\begin{definition}
    A \emph{geometric integrand} is positively homogeneous, non-negative and
    continious function $\Psi : \ray \ograss{n}{k} \to \R$.
\end{definition}

\begin{definition}
    A geometric integrand $\Psi$ is \emph{polyconvex} if it can be extended to a
    convex function on $(\tbwedge_k \R^n)$.
\end{definition}

\begin{definition}
    \label{def:upolyconvexity}
    Let $0 < c < \infty$ and let $\Psi$ be a geometric integrand.
    We say that $\Psi$ is \emph{uniformly polyconvex (with constant~$c$)} if
    \[
        \sum_{i=1}^{d} m_i \Psi(\eta_i) - \Psi(\eta_0)
        \;\ge\;
        c \bigl( \sum_{i=1}^{d} m_i |\eta_i| - |\eta_0| \bigr)
    \]
    whenever $d \in \nat$, $m_1,\ldots,m_d \in (0,\infty)$, and
    $\eta_0,\ldots,\eta_d \in \ograss{n}{k}$ satisfy
    $\eta_0 = \sum_{i=1}^{d} m_i \eta_i$.
    If $\Psi$ is uniformly polyconvex with constant $c$, we write
    $\Psi \in \UPC(c)$.
\end{definition}

\begin{remark}
    (\cite[3.22]{Lesniak_2025}). If $\Psi \in \UPC(c)$, then $\Psi$ is
    polyconvex.
\end{remark}

\begin{lemma}\label{lemma:upc_upc}
If $\Psi\in\UPC(c)$ for every $c<c_0$, then $\Psi\in\UPC(c_0)$.
\end{lemma}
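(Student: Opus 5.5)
The plan is to prove the claim pointwise in the data of Definition~\ref{def:upolyconvexity} and then pass to the limit $c \uparrow c_0$. Fix $d \in \nat$, weights $m_1,\ldots,m_d \in (0,\infty)$, and $\eta_0,\ldots,\eta_d \in \ograss{n}{k}$ with $\eta_0 = \sum_{i=1}^d m_i \eta_i$. For every $c < c_0$ the hypothesis $\Psi \in \UPC(c)$ gives
\[
    \sum_{i=1}^{d} m_i \Psi(\eta_i) - \Psi(\eta_0)
    \;\ge\;
    c \bigl( \sum_{i=1}^{d} m_i |\eta_i| - |\eta_0| \bigr) \,.
\]
The left-hand side does not depend on $c$.

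The right-hand side is an affine (indeed linear) function of $c$, with coefficient $A := \sum_{i=1}^d m_i|\eta_i| - |\eta_0|$. This coefficient is a fixed real number for the chosen data. Letting $c \uparrow c_0$ through any sequence $c_j < c_0$ (we may take $c_j > 0$ if $c_0 > 0$, as the definition requires positive constants) gives
\[
    \sum_{i} m_i \Psi(\eta_i) - \Psi(\eta_0) \ge \lim_j c_j A = c_0 A \,.
\]
Since the data were arbitrary, this is exactly $\Psi \in \UPC(c_0)$.

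The one point to check is that no sign issue arises. The inequality $\ge c_j A$ for all $j$ passes to the limit regardless of the sign of $A$, because we are taking the limit of the right-hand side and not a supremum over $c$. So no argument is needed to show $A \ge 0$, although it does hold by the triangle inequality: $|\eta_0| = |\sum_i m_i\eta_i| \le \sum_i m_i |\eta_i|$.

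I do not expect a genuine obstacle here. The only care needed is to ensure $c_0 > 0$ so that $\UPC(c_0)$ is defined, and to note that the admissible $c$ in the hypothesis form a set accumulating at $c_0$ from below.
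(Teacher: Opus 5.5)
Your proposal is correct and is essentially the paper's argument. Both fix the data $(d, m_i, \eta_i)$ and let $c \uparrow c_0$ in the inequality, whose right-hand side is linear in $c$. The paper wraps this in a proof by contradiction, while you pass to the limit directly, which is slightly cleaner.
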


\begin{proof}
    Assume toward a contradiction that $\Psi\notin\UPC(c_0)$.  
    Then, by definition, there exist
    \[
        d\in\nat,\qquad m_1,\ldots,m_d>0,\qquad
        \eta_0,\eta_1,\ldots,\eta_d\in\ograss nk
    \]
    with
    \[
        \eta_0=\sum_{i=1}^d m_i\,\eta_i
    \]
    such that the inequality for $\UPC(c_0)$ fails, i.e.
    \[
        \sum_{i=1}^d m_i\,\Psi(\eta_i)-\Psi(\eta_0)
        \;<\;
        c_0\Bigl(\sum_{i=1}^d m_i|\eta_i|-|\eta_0|\Bigr).
    \]
    Since $\Psi\in\UPC(c)$ for every $c<c_0$, we have
    \[
        \sum_{i=1}^d m_i\,\Psi(\eta_i)-\Psi(\eta_0)
        \;\ge\;
        c\Bigl(\sum_{i=1}^d m_i|\eta_i|-|\eta_0|\Bigr).
    \]
    Combining the two inequalities, we obtain, for every $c<c_0$
    \[
        c_0\Bigl(\sum m_i|\eta_i|-|\eta_0|\Bigr)
        \;>\;
        \sum m_i\Psi(\eta_i)-\Psi(\eta_0)
        \;\ge\;
        c\Bigl(\sum m_i|\eta_i|-|\eta_0|\Bigr).
    \]
    Letting $c\uparrow c_0$ the right-hand side converges to the left-hand 
    expression, contradicting the strict inequality. Hence 
    $\Psi\in\UPC(c_0)$.
\end{proof}

\begin{definition}[\protect{cf.~\cite[4.10]{Lesniak_2025}}]
    By $\PC{k}(U)$ we denote the space of polyhedral $k$-currents in 
    $U \subset \R^n$.
\end{definition}

\begin{definition}
    Recalling~\cite[4.1.8 and 4.1.32]{Federer1969} for $k \in \nat$ we set
    \[
        I_k = \underbrace{\Lbrack 0, 1 \Rbrack \times \cdots \times \Lbrack 0, 1 
        \Rbrack}_{\text{$k$ factors}} \in \IC_k(\R^k) \,.
    \]
\end{definition}

\begin{definition}[\protect{\cite[5.2]{Lesniak_2025}}]
    \label{def:polyhedral_test_pairs}
    We~say that $(S, D) \in \mathcal{P}_C$ is a~\emph{polyhedral test pair} if
    there exists $p \in \orthproj nk$ such that
    \begin{gather*}
        S \in \PC{k}(\R^n) \,,
        \quad
        D = (p^*)_{\#} I_k \,,
        \quad \text{and} \quad
        \partial S = \partial D  \,.
    \end{gather*}
\end{definition}

\begin{definition}
    Let $\Psi$ be a geometric integrand.
    For every $k$-rectifiable current $T$ represented by $(M,\tau,\Theta)$,
    we define the \emph{anisotropic energy functional} $E_{\Psi}$ by
    \[
        E_{\Psi}(T)
        := \int_M \Theta(x)\,\Psi\bigl(\tau(x)\bigr)\,\ud\HM^k(x).
    \]
\end{definition}

\begin{remark}
    By the definition of the push-forward measure (see \cite[2.1.2]{Federer1969}),
    the anisotropic energy functional can be expressed in terms of the weighted
    Gaussian image as
    \[
        E_{\Psi}(T)
        = \int_M \Theta\,\Psi\circ\tau \,\ud\HM^k
        = \int_{\ograss{n}{k}} \Psi \,\ud \tau_{\#}
        \bigl(\Theta\,\HM^k \restrict M\bigr)
        = \int_{\ograss{n}{k}} \Psi \,\ud \gamma_T .
    \]
\end{remark}

\begin{definition}\label{def:polyhedral_ellipticity}
    (\cite[5.3]{Lesniak_2025}).
    Let $\Psi$ be a geometric integrand and $\mathcal{P}_C$ be the set of 
    polyhedral test pairs. We say that $\Psi$ is \emph{elliptic with respect to 
    $\mathcal{P}_C$} if there exists $0 < c < \infty$ (the \emph{ellipticity 
    constant}) such that
    \[
        E_{\Psi}(S) - E_{\Psi}(D) \geq c \bigl( \Mass(S) - \Mass(D) \bigr)
        \quad \text{for $(S,D) \in \mathcal{P}_C$} \,.
    \]
    If this holds we write $F \in \AUE(\mathcal{P_C}, c)$.
\end{definition}

\begin{theorem}\label{thm:main1}
    Assume $\Psi$ is a Lipschitz geometric integrand, and let $c > 0$. Then
    \[
        \Psi \in \AUE(\mathcal{P_C}, c)
        \quad \Longleftrightarrow \quad
        \Psi \in \UPC(c).
    \]
\end{theorem}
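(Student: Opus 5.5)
The forward implication $\AUE(\mathcal{P}_C,c)\Rightarrow\UPC(c)$ is \cite[Theorem~5.10]{Lesniak_2025}, so I only need the converse: if $\Psi\in\UPC(c)$, then $E_\Psi(S)-E_\Psi(D)\ge c(\Mass(S)-\Mass(D))$ for every polyhedral test pair $(S,D)$. The plan is to rewrite everything through the weighted Gaussian image and reduce the inequality to a single instance of the $\UPC$ inequality.

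\textbf{Step 1 (set-up).} Write $S=\sum_{i=1}^d a_i\Lbrack\Delta_i\Rbrack$ as a finite sum of oriented $k$-simplices with nonoverlapping interiors and real coefficients. After reorienting, assume $a_i>0$ and let $\eta_i\in\ograss nk$ be the orientation of $\Delta_i$. Set $m_i=a_i\HM^k(\Delta_i)$. Then
\[
\gamma_S=\sum_i m_i\Dirac{\eta_i},\qquad E_\Psi(S)=\sum_i m_i\Psi(\eta_i),\qquad \Mass(S)=\sum_i m_i=\sum_i m_i|\eta_i|.
\]
Similarly, with $\eta_0=(p^*)_{\#}(e_1\wedge\cdots\wedge e_k)\in\ograss nk$, we have $E_\Psi(D)=\Psi(\eta_0)$ and $\Mass(D)=1=|\eta_0|$.

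\textbf{Step 2 (the key identity).} The crucial claim is
\[
\sum_i m_i\eta_i=\int\eta\,\ud\gamma_S(\eta)=\eta_0 .
\]
Since $\partial S=\partial D$, the current $S-D$ is a cycle, hence $S-D=\partial R$ for some polyhedral $(k+1)$-chain $R$ (the cone over $S-D$). For every constant $k$-form $\omega$ we have $d\omega=0$, so
\[
(S-D)(\omega)=R(d\omega)=0 .
\]
On the other hand, $S(\omega)=\langle\sum_i m_i\eta_i,\omega\rangle$ and $D(\omega)=\langle\eta_0,\omega\rangle$. As $\omega$ ranges over all constant $k$-forms, these pairings separate points of $\tbwedge_k\R^n$, which gives the identity.

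\textbf{Step 3 (apply $\UPC$).} With $\eta_0=\sum_i m_i\eta_i$, $m_i>0$ and all $\eta_i\in\ograss nk$, Definition~\ref{def:upolyconvexity} applies directly:
\[
\sum_i m_i\Psi(\eta_i)-\Psi(\eta_0)\ge c\Bigl(\sum_i m_i|\eta_i|-|\eta_0|\Bigr).
\]
By Step 1 this is exactly $E_\Psi(S)-E_\Psi(D)\ge c\bigl(\Mass(S)-\Mass(D)\bigr)$.

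\textbf{Main obstacle.} The delicate point is the bookkeeping in Step 1. The identities $E_\Psi(S)=\sum_i m_i\Psi(\eta_i)$ and $\Mass(S)=\sum_i m_i$ require a decomposition in which simplices with the same or opposite orientations on overlapping pieces have already been merged. Only then is the representation $(M,\tau,\Theta)$ of $S$ the one given by the simplices, so that $\gamma_S$ is as written. I would handle this by passing to a common simplicial subdivision and combining coefficients on each simplex. The degenerate case where $S$ has no simplices cannot occur, because $\partial S=\partial D\neq0$. If several $\eta_i$ coincide, that is harmless, since Definition~\ref{def:upolyconvexity} allows repeated entries. The Lipschitz hypothesis is not needed for this direction; it is used only in the cited forward implication.
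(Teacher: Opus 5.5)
Your converse argument is the paper's: the barycentre identity $\int\vec S\,\ud\|S\|=\Mass(D)\,\eta_0$, which follows from $\partial S=\partial D$, is applied together with the $\UPC(c)$ inequality (Jensen for $\Psi-c|\cdot|$) to the finitely atomic Gaussian image of $S$. The paper cites \cite[5.1.2]{Federer1969} for the identity where you reprove it via the cone construction, and your Step~1 bookkeeping only makes explicit what the paper leaves implicit. For the forward direction, the paper combines \cite[5.10]{Lesniak_2025} with Lemma~\ref{lemma:upc_upc}, apparently because the cited result gives $\Psi\in\UPC(c')$ only for $c'<c$ (cf.\ the analogous Theorem~\ref{thm:main2}), so you should add the closure step $c'\uparrow c$ to keep the same constant $c$.
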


\begin{proof}
    Suppose first that $\Psi \in \AUE(\mathcal{P_C}, c)$. 
    By Theorem \cite[5.10]{Lesniak_2025} and Lemma \ref{lemma:upc_upc}, we have
    $\Psi \in \UPC(c)$. Conversely, assume $\Psi \in \UPC(c)$. 
    Let $(S, D) \in \mathcal{P_C}$ be a polyhedral test pair. By the structure of 
    such test pairs and \cite[5.1.2]{Federer1969}
    \[
        \int \vec{S}(x)\ud\|S\|(x) = \int \vec{D}(x)\ud\|D\|(x) = \Mass(D)\eta_0,
        \quad \eta \in \ograss{n}{k},
    \]
    where $\eta_0 = \vec{D}$ for $\|D\|$--almost every $x\in\spt D$. Since 
    $\Psi \in \UPC(c)$, the function $G$ defined as
    \[
        G(\eta) := \Psi(\eta) - c|\eta|
    \]
    satisfies the Jensen--type inequality associated with uniform polyconvexity.
    Thus,
    \[
        \Mass(D) G(\eta_0) = G(\int \vec{S}(x)\ud\|S\|(x)) 
        \leq \int G(\vec{S}(x))\ud\|S\|(x),
    \]
    which is eqivalent to
    \[
        \Mass(D)\big( \Psi(\eta_0) - c|\eta_0| \big)
        \leq \int \Psi(\vec{S}(x))\ud\|S\|(x) - c\Mass(S).
    \]
    This shows that $\Psi \in \AUE(\mathcal{P_C}, c)$.
\end{proof}

\section{Classical Integrands}\label{section:class_integrands}

\begin{definition}
    Let $\eta_0 \in \Lambda_k(\R^n)$ be a fixed unit simple $k$-vector.
    The set of \emph{positively oriented $k$-vectors} with respect to $\eta_0$ 
    is defined by
    \[
        (\Lambda_k \R^n)^+_{\eta_0} :=
        \bigl\{
            \xi \in \Lambda_k(\R^n) : \xi \bullet \eta_0 > 0
        \bigr\}.
    \]
\end{definition}

\begin{definition}[\protect{cf. \cite[3.2.28(2)]{Federer1969}}]
    The \emph{positively oriented Grassmannian} with respect to $\eta_0$ is
    \[
        \ograss{n}{k}^{+}_{\eta_0} :=
        (\Lambda_k \R^n)^+_{\eta_0} 
        \cap 
        \bigl\{
            \xi \in \Lambda_k(\R^n) : \xi \text{ is simple and } |\xi| = 1
        \bigr\}.
    \]
\end{definition}

\begin{definition}
    Fix $\eta_0 \in \ograss nk$. We say that a geometric integrand $\Psi$ is
    \emph{uniformly polyconvex (with constant~$c$)} with respect to $\eta_0$ 
    and write $\Psi \in \UPC_{\eta_0}(c)$ if
    \[
        \sum_{i=1}^{d} m_i \Psi(\eta_i) - \Psi(\eta_0)
        \;\ge\;
        c \bigl( \sum_{i=1}^{d} m_i |\eta_i| - |\eta_0| \bigr)
    \]
    whenever $d \in \nat$, $m_1,\ldots,m_d \in (0,\infty)$, and
    $\eta_1,\ldots,\eta_d \in \ograss{n}{k}^{+}_{\eta_0}$ satisfy
    $\eta_0 = \sum_{i=1}^{d} m_i \eta_i$.
\end{definition}

\begin{definition}
    We define the class of geometric integrands that are uniformly polyconvex
    with constant $c > 0$ with respect to every reference plane
    $\xi \in \ograss{n}{k}$ by
    \[
        \UPC^{+}(c)
        :=
        \bigcap_{\xi \in \ograss{n}{k}} \UPC_{\xi}(c).
    \]
\end{definition}

\begin{remark}\label{lemma:plane_wise_upc}
    By the same argument as in the proof of Lemma~\ref{lemma:upc_upc}, the
    following properties hold
    \begin{itemize}
        \item[(i)] Fix $\eta_0 \in \ograss{n}{k}$.  
        If $\Psi \in \UPC_{\eta_0}(c)$ for every $c < c_0$, then
        $\Psi \in \UPC_{\eta_0}(c_0)$.
        \item[(ii)] If $\Psi \in \UPC^{+}(c)$ for every $c < c_0$, then
        $\Psi \in \UPC^{+}(c_0)$.
    \end{itemize}
\end{remark}

\begin{remark}
    Let $e_1,\ldots,e_n$ denote the standard basis of $\R^n$.  
    We set
    \[
        \xi_0 := e_1 \wedge e_2 \wedge \dots \wedge e_k,
    \]
    and define the shorthand notation
    \[
        \ograss{n}{k}^{+} := \ograss{n}{k}^{+}_{\xi_0}, \qquad
        (\Lambda_k \R^n)^+ := (\Lambda_k \R^n)^+_{\xi_0}.
    \]
\end{remark}

\begin{definition}
    \label{def:classical_integrand}
    A \emph{classical integrand} is a continuous, non-negative function
    \[
        \psi : \R^{(n-k)\times k} \to \R.
    \]
\end{definition}

\begin{definition}[\protect{\cite[1.6]{Rosa2023}}]
    For $X \in \R^{(n-k)\times k}$, let $\id{k}$ denote the $k \times k$
    identity matrix and define
    \[
        M(X) :=
        \begin{pmatrix}
            \id{k} \\
            X
        \end{pmatrix}
        \in \R^{n\times k}.
    \]
    Writing $w_1(X),\ldots,w_k(X)$ for the columns of $M(X)$, we define
    \[
        \tbwedge M(X) := w_1(X)\wedge \cdots \wedge w_k(X)
        \in \Lambda^k(\R^n).
    \]
    This yields a map
    \[
        \tbwedge M :
        \R^{(n-k)\times k}
        \longrightarrow
        \Lambda^k(\R^n).
    \]
\end{definition}

\begin{remark}[\protect{\cite[1.6]{Rosa2023}}]
    In the standard basis of $\Lambda^k(\R^n)$, the coordinates of
    $\bigwedge M(X)$ are precisely the minors of $X$ of all orders, which
    coincide with the $k\times k$ minors of the matrix $M(X)$.
\end{remark}

\begin{definition}
    A classical integrand $\psi$ is \emph{polyconvex} if there exists a gauge
    \begin{displaymath}
        \Psi : (\tbwedge_k \R^n)^+ \longrightarrow \R
    \end{displaymath}
    such that $\psi = \Psi \circ \tbwedge M$.
\end{definition}

\begin{definition}
    A classical integrand $\psi$ is \emph{uniformly polyconvex} if it is
    polyconvex and the associated geometric integrand $\Psi \in \UPC_{\xi_0}(c)$.
    If $\psi$ is uniformly polyconvex with constant $c$, we write
    $\psi \in \UPC^{\text{cl}}_{\xi_0}(c)$.
\end{definition}

\begin{remark}\label{rem:class_geom_poly_equivalence}
    By definition, if $\psi = \Psi \circ \tbwedge M$, then 
    $\psi \in \UPC^{\text{cl}}_{\xi_0}(c)$ if and only if 
    $\Psi \in \UPC_{\xi_0}(c)$.
\end{remark}

\section{Lipschitz $Q$-valued functions}\label{section:q_valued_functions}

\begin{remark}
    Throughout this paper we work in the framework of multiple-valued functions
    and the associated theory of currents developed in
    \cite{delellis2013multiplevaluedfunctionsintegral},
    \cite{De_Lellis_2011_Q_valued_functions_revisited},
    and \cite{DeLellis2011a}.
    Unless otherwise stated, measurability is always understood with respect to 
    the appropriate Hausdorff measure. Moreover, $\Sigma \subset \R^{n+m}$ 
    denotes a $\cnt{1}$, $m$-dimensional oriented submanifold.
\end{remark}

\begin{definition}(\cite[1.1]{DeLellis2011a})
    Let $Q \in \natp$. The metric space
    $(\qspace{Q}(\R^n), \mathcal{G})$ of unordered $Q$-tuples is defined by
    \[
        \qspace{Q}(\R^n)
        :=
        \left\{
        \sum_{i=1}^Q \llbracket P_i \rrbracket
        \;:\;
        P_i \in \R^n \text{ for } i = 1,\dots,Q
        \right\},
    \]
    where $\llbracket P_i \rrbracket$ denotes the Dirac mass at
    $P_i \in \R^n$.
    For any two elements
    \[
        T_1 = \sum_{i=1}^Q \llbracket P_i \rrbracket,
        \qquad
        T_2 = \sum_{i=1}^Q \llbracket S_i \rrbracket,
    \]
    the distance is defined by
    \[
        \mathcal{G}(T_1,T_2)
        :=
        \min_{\sigma \in \mathcal{P}_Q}
        \left(
        \sum_{i=1}^Q \lvert P_i - S_{\sigma(i)} \rvert^2
        \right)^{1/2},
    \]
    where $\mathcal{P}_Q$ denotes the group of permutations of $\{1,\dots,Q\}$.
\end{definition}

\begin{lemma}(\cite[1.1]{delellis2013multiplevaluedfunctionsintegral})\label{lem:1.1}
    Let $M \subset \Sigma$ be measurable and let
    $F : M \to \qspace{Q}(\R^n)$ be Lipschitz.
    Then there exist a countable partition $\{M_i\}_{i\in \nat}$ of $M$ into bounded
    measurable subsets and Lipschitz functions
    $f_i^j : M_i \to \R^n$ ($j = 1,\ldots,Q$) such that:
    \begin{enumerate}
        \item
        $F|_{M_i} = \sum_{j=1}^Q \llbracket f_i^j \rrbracket$ for every $i$, and
        $\operatorname{Lip}(f_i^j) \le \operatorname{Lip}(F)$ for all $i,j$;

        \item
        for every $i$ and $j,j' \in \{1,\ldots,Q\}$, either
        $f_i^j \equiv f_i^{j'}$ or
        $f_i^j(x) \neq f_i^{j'}(x)$ for all $x \in M_i$;

        \item
        for every $i$,
        \[
        DF(x)
        =
        \sum_{j=1}^Q \llbracket \uD f_i^j(x) \rrbracket
        \quad \text{for a.e.\ } x \in M_i .
        \]
    \end{enumerate}
    Moreover, for $\HM^m$-almost every $x$, the differential $DF(x)$ is well
    defined independently of the chosen decomposition.
\end{lemma}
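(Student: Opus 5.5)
The plan follows the standard route for decomposing Lipschitz $Q$-valued maps: first localise to pieces on which $F$ has constant multiplicity structure, then select Lipschitz branches on each piece, and finally read off the differential. Throughout I would use two facts about $(\qspace{Q}(\R^n),\mathcal{G})$. One is the bi-Lipschitz embedding $\boldsymbol{\xi}:\qspace{Q}(\R^n)\to\R^N$ of Almgren (as in \cite{De_Lellis_2011_Q_valued_functions_revisited}). The other is the elementary \emph{separation lemma}: if $T=\sum_j\Lbrack P_j\Rbrack$ has distinct support points with mutual separation $\delta>0$, and $\mathcal{G}(T,S)<\delta/4$, then $S$ splits uniquely as $\sum_j\Lbrack S_j\Rbrack$ with $S_j$ close to $P_j$, and $\mathcal{G}(T,S)^2=\sum_j|P_j-S_j|^2$ up to the grouping of multiplicities.

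\textbf{Step 1 (stratification by multiplicity).} For $x\in M$, write $F(x)=\sum_{l=1}^{L(x)}\theta_l\Lbrack P_l(x)\Rbrack$ with the $P_l(x)$ distinct. Partition $M$ into the countably many Borel sets $M^{\boldsymbol\theta}$ on which the multiplicity vector $\boldsymbol\theta=(\theta_1,\dots,\theta_L)$, up to ordering, is constant. Refine each $M^{\boldsymbol\theta}$ into the sets where the minimal separation $\min_{l\neq l'}|P_l(x)-P_{l'}(x)|$ lies in $(2^{-s-1},2^{-s}]$. Further intersect with bounded sets of diameter less than $2^{-s}/(8\Lip(F)+8)$; these come from a countable cover of $\Sigma$ by small balls. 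Borel measurability of all these sets follows from continuity of $F$ and of $\boldsymbol\xi$.

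\textbf{Step 2 (Lipschitz selection).} On each piece $M_i$ obtained in Step 1, fix a base point $x_0$ and label the distinct values $P_1(x_0),\dots,P_L(x_0)$. For $x\in M_i$ we have $\mathcal{G}(F(x),F(x_0))\le \Lip(F)\,\mathrm{diam}\,M_i<2^{-s-3}$, so the separation lemma assigns to each $P_l(x_0)$ a unique nearby value $P_l(x)$ of multiplicity $\theta_l$. Set $f_i^j:=P_l$ for the $\theta_l$ indices $j$ belonging to block $l$. This yields (a) and (b) directly: coincident branches are exactly those in the same block, and distinct blocks never meet. For any $x,y\in M_i$ the same separation argument matches blocks, so $|f_i^j(x)-f_i^j(y)|\le\mathcal{G}(F(x),F(y))\le\Lip(F)|x-y|$.

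\textbf{Step 3 (differential).} Extend each $f_i^j$ to a Lipschitz map on $\Sigma$ via Kirszbraun, and note that $\Lip$ is preserved. By Rademacher's theorem on the $\cnt1$ manifold $\Sigma$, together with density points of $M_i$, the approximate differential $\uD f_i^j(x)$ exists for $\HM^m$-a.e. $x\in M_i$. Define $DF(x)$ as the $Q$-valued first-order Taylor expansion, $\mathcal{G}\bigl(F(y),\sum_j\Lbrack f_i^j(x)+\uD f_i^j(x)(y-x)\Rbrack\bigr)=o(|y-x|)$. At density points of $M_i$, this expansion is uniquely determined by $F$ alone, i.e.\ it is independent of the partition. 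The reason is that near $x$, with $y\in M_i$, the separation lemma makes the matching forced, and two approximately-linear $Q$-valued expansions agreeing on a set of density one must coincide. This gives (c) and the final uniqueness claim. I expect this uniqueness/well-definedness to be the main obstacle, because off $M_i$ the branches may collide. I would handle it by proving the statement at points of approximate differentiability of $\boldsymbol\xi\circ F$ and invoking the locality of approximate differentials.
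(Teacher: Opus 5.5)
Your proof is correct, but the comparison has to be made with the cited source: the paper gives no proof of this lemma and simply quotes it from De Lellis--Spadaro. As far as I recall, the argument there is an induction on $Q$. One separates off the relatively closed set where $F(x)=Q\Lbrack g(x)\Rbrack$, on which $g$ is Lipschitz. On the rest, $F$ is split locally into two Lipschitz maps with fewer values and separated supports; one then takes a countable subcover and applies the inductive hypothesis.

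You instead do a one-shot stratification by multiplicity pattern and separation scale $2^{-s}$, followed by pieces of diameter below $2^{-s}/(8\Lip(F)+8)$, and your constants work. For $x,y\in M_i$, each point of $F(x)$ and $F(y)$ lies within $2^{-s-3}$ of the corresponding point of $F(x_0)$, while different blocks stay more than $2^{-s-2}$ apart. Hence the optimal matching between $F(x)$ and $F(y)$ respects blocks, and $|f_i^j(x)-f_i^j(y)|\le\mathcal{G}(F(x),F(y))$. Your route makes (b) automatic, since the multiplicity pattern is constant on each piece, and it controls the pieces quantitatively without induction. The inductive route is shorter once a local splitting lemma is available. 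Two small fixes: treat the stratum where $F(x)$ has a single support point separately (there is no minimal separation there, but it is trivial), and disjointify the ball cover so that you get a partition.

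In Step 3, the obstacle you anticipate is not real, and the detour through $\boldsymbol{\xi}\circ F$ is unnecessary. It would also be harder, since $\boldsymbol{\xi}$ is only piecewise linear and $\boldsymbol{\xi}^{-1}$ is merely Lipschitz. If $x$ is a density point of $M_i$, then $\operatorname{dist}(y,M_i)=o(|y-x|)$ for $y\in\Sigma$, because otherwise a ball of radius comparable to $|y-x|$ would be missing from $M_i$. Since $F$ is Lipschitz on all of $M$, the first-order expansion obtained along $M_i$ therefore holds for every $y\in M$, whether or not branches collide off $M_i$.

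Uniqueness of a $Q$-valued affine expansion follows by blow-up. Two such maps that agree on a dense set of directions in $T_x\Sigma$ agree on all of $T_x\Sigma$. For each permutation $\sigma$, the set where the $\sigma$-matched branches coincide is an affine subspace, and $T_x\Sigma$ is not a finite union of proper affine subspaces, so one permutation matches the branches identically. Independence from the decomposition then goes as you indicate. On $M_i\cap M'_k$ near $x$, property (b) for both decompositions, together with the separation of the distinct points of $F(x)$, forces the blocks to agree. Lipschitz maps that coincide on a set of density one at $x$ have the same differential there.
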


\begin{definition}(\cite[1.2]{delellis2013multiplevaluedfunctionsintegral})
    Let $M \subset \Sigma$ be measurable and let
    $F \colon M \to \qspace{Q}(\R^n)$ be a Lipschitz map.
    We say that $F$ is \emph{proper} if the following holds:
    for every Lipschitz decomposition given by Lemma~\ref{lem:1.1}, namely,
    for every bounded measurable set $M_i \subset M$ and Lipschitz functions
    $f_i^j \colon M_i \to \R^n$ ($j=1,\dots,Q$) such that
    \[
        F|_{M_i} = \sum_{j=1}^Q \llbracket f_i^j \rrbracket,
    \]
    and for every compact set $K \subset \R^n$, the set
    \[
        \bigcup_{j=1}^Q \overline{(f_i^j)^{-1}(K)}
    \]
    is compact in $M_i$.
\end{definition}

\section{$Q$-graphs of Lipschitz functions}\label{section:q_graphs}

\begin{definition}(\cite[1.3]{delellis2013multiplevaluedfunctionsintegral})
    Let $M \subset \Sigma$ be measurable, and
    $F : M \to \mathcal A_Q(\R^n)$ a proper Lipschitz map.
    The push-forward of $M$ through $F$ is the $m$-current
    \[
        T_F := \sum_{i,j} (f_{ij})_\# \llbracket M_i \rrbracket,
    \]
    where $\{M_i,f_{ij}\}$ is any decomposition as in Lemma~\ref{lem:1.1}.
\end{definition}

\begin{remark}\label{rem:action_of_T}
    For a smooth compactly supported differential $m$-form 
    $\omega \in \mathcal{D}^m(\R^n)$, the action of $T_F$ can be 
    written as
    \[
        T_F(\omega)
        := 
        \sum_{i\in\nat} \sum_{j=1}^Q \int_{M_i}
        \langle \omega(f_i^j(x)),\, \uD f_i^j(x)_\sharp\, \vec e(x) \rangle
        \ud \HM^m(x).
    \]
\end{remark}

\begin{lemma}(\cite[1.4]{delellis2013multiplevaluedfunctionsintegral})
\label{prop:1.4}
The action of $T_F$ given in Remark~\ref{rem:action_of_T} is independent of the
chosen partition $\{M_i\}$ and decomposition $\{f_i^j\}$.
Hence $T_F$ is an integer rectifiable current
\[
    T_F = (\operatorname{Im}(F),\, \Theta,\, \vec \tau),
\]
where:
\begin{enumerate}
    \item
        $\operatorname{Im}(F)
        = \bigcup_{x\in M} \operatorname{spt}(F(x))
        = \bigcup_i \bigcup_{j=1}^Q f_i^j(M_i)$
        is an $m$-rectifiable set;

    \item
        $\vec \tau$ is a Borel unit $m$-vector orienting $\operatorname{Im}(F)$, and for
        $\HM^m$-a.e.\ $p \in \operatorname{Im}(F)$,
        if $f_i^j(x) = p$ and $\uD f_i^j(x)\,\sharp\,\vec e(x) \neq 0$, then
        \[
            \vec \tau(p)
            = \pm \frac{\uD f_i^j(x)_\sharp\, \vec e(x)}
            {|\uD f_i^j(x)_\sharp\, \vec e(x)|};
        \]

    \item
        \[\Theta(p)
            = \sum_{i,j,\,x: f_i^j(x)=p}
            \left\langle
            \vec \tau(p),\,
            \frac{\uD f_i^j(x)_\sharp\, \vec e(x)}
            {|\uD f_i^j(x)_\sharp\, \vec e(x)|}
            \right\rangle .
        \]
    \end{enumerate}
\end{lemma}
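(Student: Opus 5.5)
The plan is to reduce everything to the single-valued case treated in \cite[Ch.~1]{Rosa2023} and in Federer's theory of Lipschitz push-forwards, via the decomposition of Lemma~\ref{lem:1.1}. Fix a proper Lipschitz $F : M \to \qspace{Q}(\R^n)$ together with a decomposition $\{M_i, f_i^j\}$. On each piece, $(f_i^j)_\# \Lbrack M_i \Rbrack$ is a classical Lipschitz push-forward. By the area formula \cite[3.2.20, 4.1.30]{Federer1969}, it acts on a form $\omega$ by
\[
    \int_{M_i} \langle \omega(f_i^j(x)), \uD f_i^j(x)_\sharp \vec e(x)\rangle \ud \HM^m(x).
\]
Summing over $i,j$ gives the formula of Remark~\ref{rem:action_of_T}. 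The sum converges absolutely for compactly supported $\omega$ because $F$ is proper: only finitely many pieces meet a neighbourhood of $\spt\omega$ in a nontrivial way after restricting to the compact preimages, and the Jacobians are bounded by $\Lip(F)^m$.

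\textbf{Independence of the decomposition.} Given two decompositions $\{M_i,f_i^j\}$ and $\{M'_l,g_l^j\}$, I would pass to the common refinement $M_i\cap M'_l$. On each intersection both families represent the same $Q$-point $F(x)$, so for every $x$ there is a permutation $\sigma_x$ with $g_l^j(x)=f_i^{\sigma_x(j)}(x)$. I then split $M_i\cap M'_l$ into finitely many measurable sets on which $\sigma_x$ is constant. On such a set the two collections of functions coincide up to relabelling. At density points of these sets, the differentials agree $\HM^m$-a.e., since Lipschitz functions that are equal on a set have equal approximate differentials at its density points. This is also what gives the well-definedness of $DF$ in Lemma~\ref{lem:1.1}. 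Hence the integrands agree a.e. and the sums coincide, with countable additivity of the integrals justifying the regrouping.

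\textbf{Rectifiable representation.} Each $f_i^j(M_i)$ is $m$-rectifiable, being a Lipschitz image of a subset of the $\cnt{1}$ manifold $\Sigma$, so $\operatorname{Im}(F)$ is a countable union of such sets and is rectifiable; this gives (a). For (b), at $\HM^m$-a.e.\ $p\in\operatorname{Im}(F)$ the approximate tangent plane exists. For each $x$ with $f_i^j(x)=p$ and nonzero Jacobian, $\uD f_i^j(x)_\sharp\vec e(x)$ is a simple $m$-vector spanning that tangent plane (area formula together with \cite[3.2.19]{Federer1969}). Choosing one orientation $\vec\tau(p)$ measurably, each such normalized vector equals $\pm\vec\tau(p)$. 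The points with zero Jacobian contribute nothing, by the area formula.

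For (c), I would rewrite each piece with the area formula as an integral over the image:
\[
    \int_{\R^n}\sum_{x\in (f_i^j)^{-1}(p)} \Bigl\langle \omega(p), \frac{\uD f_i^j(x)_\sharp\vec e(x)}{|\uD f_i^j(x)_\sharp \vec e(x)|}\Bigr\rangle \ud\HM^m(p).
\]
Substituting $\langle\omega(p),\pm\vec\tau(p)\rangle$ and summing over $i,j$ gives $T_F=(\operatorname{Im}(F),\Theta,\vec\tau)$ with the stated $\Theta$. Each term of the sum is $\pm1$, so $\Theta$ is integer valued. Finiteness of the preimage count a.e.\ follows from the area formula and local finiteness of mass, which properness provides.

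The main obstacle I expect is the measurable selection and the a.e.\ equality of differentials under relabelling when comparing decompositions, together with the bookkeeping needed to make the countable sums absolutely convergent locally. Both are handled by local properness and the bound $\Lip(f_i^j)\le\Lip(F)$.
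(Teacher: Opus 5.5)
The paper gives no proof of this lemma (it is quoted from De Lellis--Spadaro). Your outline is essentially the standard argument of that source and is correct: the area formula on each piece $(f_i^j)_\#\Lbrack M_i\Rbrack$, a common refinement into permutation-constant sets with a.e.\ equality of differentials at density points for independence, and then reading off $\operatorname{Im}(F)$, $\vec\tau$ and $\Theta$ from the area formula over the image.

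The one thing to fix is your claim that properness leaves only finitely many pieces $M_i$ meeting the preimage of $\spt\omega$; a countable partition may have infinitely many pieces there. Absolute convergence instead follows from $\sum_{i,j}\int_{M_i\cap(f_i^j)^{-1}(\spt\omega)}|\uD f_i^j(x)_\sharp\vec e(x)|\ud\HM^m(x)\le Q\,\Lip(F)^m\,\HM^m\bigl(\bigcup_{i,j}(f_i^j)^{-1}(\spt\omega)\bigr)$, which is finite by properness.
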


\begin{definition}(\cite[1.10]{delellis2013multiplevaluedfunctionsintegral})\label{def:Q-graphs}
    Let $M \subset \Sigma$ be measurable, and let $f \colon M \to \mathcal A_Q(\R^n)$ 
    be a proper Lipschitz map, with the following decomposition
    \[
        f = \sum_{i=1}^Q \llbracket f_i \rrbracket.
    \]
    Define the map $\tilde{F} \colon M \to \qspace{Q}(\R^{m+n})$ by
    \[
        \tilde{F}(x) := \sum_{i=1}^Q \llbracket (x, f_i(x)) \rrbracket.
    \]
    The current $T_{\tilde{F}}$ associated with $\tilde{F}$ is called the \emph{graph current} of 
    $f$ and will be denoted by $G_f$.
\end{definition}

\begin{remark}
    By the area formula for $Q$-graphs (\cite[1.11]{delellis2013multiplevaluedfunctionsintegral}), 
    if $f$ is a proper Lipschitz map, then the associated graph current $G_f$ has 
    finite mass, given by
    \[
        \Mass(G_f)
        =
        \int_M \sum_{i=1}^Q
        \sqrt{\det\bigl(I + (\uD f_i)^T \uD f_i\bigr)} \ud \HM^m,
    \]
    where $\{f_i\}$ are as in Lemma~\ref{lem:1.1}.
\end{remark}

\section{Ellipticity}\label{section:ellipticity}

\begin{definition}
    \label{def:graph_test_pair}
    Let $\eta \in \ograss nk$ and let $D \subset \asssp \eta \subset \R^n$ 
    be a unit cube contained in the $k$-dimensional plane $\asssp \eta$. Let
    \[
        f, h \colon D \to \qspace{Q}((\asssp \eta)^{\perp})
    \] 
    be proper Lipschitz $Q$-valued functions, and assume that
    \[
        h(x) = \sum_{j=1}^{J} Q_j \llbracket a_j + L_j x \rrbracket,
        \qquad a_j \in \R^{\,n-k},\;
        L_j \in \R^{(n-k) \times k},\quad
        \sum_{j=1}^J Q_j = Q.
    \]
    Denote by $G_f$ and $G_h$ the associated graph currents in $\R^n$. 
    We say that $(G_f, G_h)$ is a \emph{graph test pair}, and write
    $(G_f, G_h) \in \mathcal{P}_G$, if $f$ and $h$ coincide on the boundary of $D$:
    \[
        f(x) = h(x) \quad \text{for all } x \in \partial D.
    \]
    In particular, this implies that $\partial G_f = \partial G_h$.
\end{definition}

\begin{definition}
    Fix $\eta_0 \in \ograss{n}{k}$. By $\mathcal{P}_G(\eta_0)$ we denote the
    class of graph test pairs whose base cube lies in the plane $\asssp\eta_0$.
\end{definition}

\begin{definition}
    \label{def:ellipticity}
    Let $\Psi$ be a geometric integrand and let $\mathcal{P}_G$ be the class of
    graph test pairs.
    We say that $\Psi$ is \emph{elliptic with respect to $\mathcal{P}_G$} if there
    exists a constant $c>0$ (the \emph{ellipticity constant}) such that
    \[
        E_{\Psi}(G_f) - E_{\Psi}(G_h)
        \geq
        c\bigl(\Mass(G_f) - \Mass(G_h)\bigr)
        \quad \text{for all } (G_f,G_h)\in\mathcal{P}_G.
    \]
    If this holds, we write $\Psi \in \AUE(\mathcal{P}_G,c)$.
\end{definition}

\section{Main Result}\label{section:main_result}

\begin{definition}[\protect{cf.~\cite[\S 3]{Rosa2023}}]
    Let $e_1,\ldots,e_n$ be the standard basis of $\R^n$, and let 
    $P~\subseteq~\R^n$ be a linear subspace of dimension $k$. We say that $P$ 
    has \emph{rational slope} if there exists $f~\in~\Hom(\R^n,\R^{n-k})$ such 
    that 
    \[
        P = \ker f \quad \text{and} \quad f(e_i) \in \Z^{\,n-k} \text{ for all } i = 1,\ldots,n.
    \]
\end{definition}

\begin{definition}[\protect{\cite[5.6]{Lesniak_2025}}]
    Let $u_1,\ldots,u_{n-k}$ be the standard basis of~$\R^{n-k}$ and
    $\ast : \tbwedge_{n-k} \R^n \to \tbwedge_k \R^n$ be the Hodge star defined with
    respect to the standard orientation of $\R^n$; cf.~\cite[1.7.8]{Federer1969}.
    Set
    \begin{gather}
        \xi_{f} = \ast \bigl( f^*(u_1) \wedge \cdots \wedge f^*(u_{n-k}) \bigr)
        \in \tbwedge_k \R^n
        \quad \text{for $f \in \Hom(\R^n,\R^{n-k})$} 
        \\
        \text{and} \quad
        Q(n,k) = \bigl\{ t \xi_f :
        t \in \R ,\,
        f \in \Hom(\R^n,\R^{n-k}) ,\, f^*(u_i) \in \integers^{n}
        \text{ for } i \in \{ 1,2,\ldots,n-k \}
        \bigr\} \,.
    \end{gather}
\end{definition}

\begin{lemma}[\protect{\cite[5.8]{Lesniak_2025}}]\label{lem:SQ_dense}
    There holds
    \[
        S_{\Q} = \bigl\{ w_1 \wedge \cdots \wedge w_k : w_1,\ldots,w_k \in \Q^n \bigr\}
        \subseteq Q(n,k)
        \subseteq S_{\R} 
        := \bigl\{ \eta \in \tbwedge_k \R^n : \eta \text{ is simple} \bigr\},
    \]
    and $S_{\Q}$ is dense in $S_{\R}$.
\end{lemma}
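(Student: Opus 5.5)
We need to prove Lemma \ref{lem:SQ_dense}: $S_\Q\subseteq Q(n,k)\subseteq S_\R$, and $S_\Q$ is dense in $S_\R$.

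For $Q(n,k)\subseteq S_\R$, first take any $f\in\Hom(\R^n,\R^{n-k})$. The $(n-k)$-vector $f^*(u_1)\wedge\cdots\wedge f^*(u_{n-k})$ is a wedge of vectors, hence simple. The Hodge star maps simple $(n-k)$-vectors to simple $k$-vectors: if the $f^*(u_i)$ are linearly independent, then $\ast$ of their wedge equals, up to a scalar, the wedge of an orthonormal basis of the orthogonal complement of their span, which is $\ker f$ (cf.~\cite[1.7.8]{Federer1969}). If they are dependent, the wedge is $0$, and $0$ is simple. Scaling by $t$ preserves simplicity, so $Q(n,k)\subseteq S_\R$.

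For $S_\Q\subseteq Q(n,k)$, let $\eta=w_1\wedge\cdots\wedge w_k$ with $w_i\in\Q^n$. If the $w_i$ are dependent, then $\eta=0=0\cdot\xi_f$ for any $f$. Otherwise, clear denominators so that $w_i\in\Z^n$; this only changes $\eta$ by a positive rational factor. The orthogonal complement $P^\perp$ of $P=\operatorname{span}\{w_i\}$ is the null space of an integer matrix, so Gaussian elimination over $\Q$ gives a rational basis of $P^\perp$. Scaling gives an integer basis $v_1,\ldots,v_{n-k}$. Define $f$ by $f^*(u_i)=v_i$, so that $f(x)=(v_i\bullet x)_i$. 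Then $\xi_f=\ast(v_1\wedge\cdots\wedge v_{n-k})$ is a nonzero simple $k$-vector whose associated space is $\ker f=P$. Simple $k$-vectors with the same $k$-dimensional associated space are scalar multiples of each other, so $\eta=t\xi_f$ for some real $t$ (possibly negative, which covers orientation). Hence $\eta\in Q(n,k)$.

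For density, let $\eta=w_1\wedge\cdots\wedge w_k\in S_\R$, with $w_i\in\R^n$ possible since $\eta$ is simple. Choose $w_i^{(m)}\in\Q^n$ with $w_i^{(m)}\to w_i$. The map $(w_1,\ldots,w_k)\mapsto w_1\wedge\cdots\wedge w_k$ is multilinear, hence continuous, so $w_1^{(m)}\wedge\cdots\wedge w_k^{(m)}\to\eta$. The case $\eta=0$ is trivial.

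The main obstacle is the middle inclusion: producing an integer-valued $f$ with kernel exactly $\operatorname{span}\{w_i\}$, and identifying $\ast$ of the wedge with a multiple of $\eta$. This rests on the standard fact that $\ast$ maps the unit $(n-k)$-vector of a subspace $V$ to $\pm$ the unit $k$-vector of $V^\perp$, together with the rationality of orthogonal complements of rational subspaces.
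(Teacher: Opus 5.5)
Your proof is correct. The three steps all hold: the middle inclusion via a rational (and, after scaling, integral) basis $v_1,\ldots,v_{n-k}$ of $P^\perp$ with $f(x)=(v_i\bullet x)_i$; the identification of $\ast(v_1\wedge\cdots\wedge v_{n-k})$ as a nonzero element of the line $\tbwedge_k P$; and density via continuity of $(w_1,\ldots,w_k)\mapsto w_1\wedge\cdots\wedge w_k$.

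The paper does not reprove the lemma here; it imports it from \cite[5.8]{Lesniak_2025}, so there is no in-paper proof to compare against, and your argument is a natural self-contained one. Your step of clearing denominators in the $w_i$ is harmless but never used.
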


\begin{lemma}\label{lemma:approximate}
    Suppose $d\in\nat$, $m_1,\dots,m_d>0$, and
    $\eta_0,\eta_1,\dots,\eta_d\in\ograss nk$ satisfy
    \[
        \sum_{i=1}^d m_i\,\eta_i=\eta_0,
    \]
    where $\eta_1,\dots,\eta_d$ are positively oriented with respect to $\eta_0$.
    Then for every $\varepsilon>0$ there exist an integer
    $N$ with $0\le N-d\le\binom{n}{k}$ and elements
    $\tilde\eta_0,\dots,\tilde\eta_N\in Q(n,k)\cap\ograss nk$ such that
    \[
        |\tilde\eta_0-\eta_0|<\frac{\varepsilon}{2},
        \qquad
        |\tilde\eta_i-\eta_i|<\frac{\varepsilon}{2dm_i}
        \quad\text{for }i=1,\dots,d,
    \]
    and
    \[
        \tilde\eta_0=\sum_{i=1}^d m_i\tilde\eta_i+\sum_{i=d+1}^N m_i\tilde\eta_i,
    \]
    where $m_i\ge0$ for $i=d+1,\dots,N$ and each $\tilde\eta_i$ is positively
    oriented with respect to $\tilde\eta_0$.
\end{lemma}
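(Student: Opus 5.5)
We approximate each datum by rational simple unit $k$-vectors, using the density result Lemma~\ref{lem:SQ_dense}. The identity $\sum m_i\eta_i=\eta_0$ will then hold only up to a small error. We absorb that error into at most $\binom nk$ extra terms with nonnegative coefficients, taken from a fixed basis of rational simple $k$-vectors surrounding $\tilde\eta_0$.

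\textbf{Step 1 (approximating the given terms).} Fix $\delta>0$, to be chosen at the end. By Lemma~\ref{lem:SQ_dense}, pick $\zeta_i\in S_{\Q}$ with $|\zeta_i-\eta_i|<\delta$ for $i=1,\dots,d$. Since $\Q^n$ is invariant under scaling by rationals, we may take $\zeta_i=w_1\wedge\cdots\wedge w_k$ with the $w_j$ obtained from a rational orthonormalisation. Concretely, approximate an orthonormal frame of $\asssp\eta_i$ by rational vectors and normalise with a Pythagorean-type rational rescaling. Alternatively, we may simply note that $Q(n,k)$ is invariant under every real scaling $t$, so dividing by $|\zeta_i|$ keeps us in $Q(n,k)$. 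The second route is cleaner: set $\tilde\eta_i=\zeta_i/|\zeta_i|\in Q(n,k)\cap\ograss nk$. Then $|\tilde\eta_i-\eta_i|\le 2\delta/(1-\delta)$, which is below $\varepsilon/(2dm_i)$ once $\delta$ is small.

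\textbf{Step 2 (defining $\tilde\eta_0$ and the correction terms).} Rather than matching $\eta_0$ exactly, we enlarge it slightly so that the residual is a strictly positive combination of a fixed cone. Choose $\binom nk$ simple unit vectors $\beta_1,\dots,\beta_{\binom nk}\in Q(n,k)\cap\ograss nk$ forming a basis of $\tbwedge_k\R^n$; for instance, take the standard basis vectors $e_\lambda$ rotated slightly so that all lie in $\ograss{n}{k}^+_{\eta_0}$. Let $\kappa=\sum_l\beta_l$, so every vector $v$ with $|v|<r$ satisfies $\kappa+v/\rho\in\{\sum a_l\beta_l: a_l>0\}$ for suitable constants $r,\rho$. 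Set
\[
\sigma=\sum_{i=1}^d m_i\tilde\eta_i,\qquad |\sigma-\eta_0|\le \textstyle\sum m_i\cdot 3\delta=:\theta .
\]
We want a rational simple unit vector $\tilde\eta_0$ such that $\lambda\tilde\eta_0-\sigma$ is a nonnegative combination of the $\beta_l$ for some $\lambda$. Since the coefficients $m_i$ are fixed, we rescale instead. Pick $\tilde\eta_0\in Q(n,k)\cap\ograss nk$ with $|\tilde\eta_0-\eta_0|<\delta$. The vector $(1+s)\tilde\eta_0-\sigma$ equals $s\eta_0+O(\theta+\delta)$. The point $\eta_0$ lies in the interior of the cone generated by the $\beta_l$ once these are chosen as small perturbations clustered so that $\eta_0$ is interior. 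Hence for $s$ fixed small and $\delta\ll s$, this vector is a positive combination $\sum_l a_l\beta_l$. Dividing the whole identity by $1+s$ gives
\[
\tilde\eta_0=\sum_i \tfrac{m_i}{1+s}\tilde\eta_i+\sum_l \tfrac{a_l}{1+s}\beta_l .
\]

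\textbf{Step 3 (repairing the coefficients).} The rescaled coefficients $m_i/(1+s)$ differ from the required $m_i$. To restore $m_i$ exactly, we run Step 2 with the sign reversed. Since $\sum m_i\eta_i=\eta_0$ with $\eta_0$ a unit vector, the residual $\tilde\eta_0-\sigma$ is small; we need it to lie in the open cone $C$ generated by the $\beta_l$. Replace the plain choice of $\tilde\eta_0$ by a rational approximation of $\eta_0+t\kappa'$ normalised, where $\kappa'\in C$. The residual $\tilde\eta_0-\sigma$ is then approximately $t\kappa'+(\text{normalisation correction})\eta_0+O(\theta+\delta)$, and the task is to choose things so that it stays inside $C$.

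\textbf{Main obstacle.} The key difficulty is exactly this compatibility: the error must land in a cone of positively oriented simple vectors whose span is all of $\tbwedge_k\R^n$, while $\tilde\eta_0$ remains \emph{unit} and \emph{simple}. Simple vectors form a nonlinear cone, so the positive cone of nearby simple unit vectors around $\eta_0$ is not a neighbourhood of $\eta_0$ in general; in particular $\eta_0$ itself is extreme in such a cone.

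The first thing I would try is to choose the $\beta_l$ near $\eta_0$ but spread out, for instance $\beta_l\propto\eta_0+\tau e_\lambda$ after rational perturbation. These remain positively oriented for small $\tau$, and their positive span contains a full open cone around the direction $\eta_0$. Then let the extra coefficients absorb the error, choosing $\tilde\eta_0$ so that the norm deficit is compensated. Positivity of the orientation with respect to $\tilde\eta_0$ follows by continuity from positivity with respect to $\eta_0$, and finally $\delta$ is chosen against $\varepsilon$.
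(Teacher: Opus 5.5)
Your proposal never reaches the required identity with the \emph{original} weights $m_1,\dots,m_d$. Step~2 yields the weights $m_i/(1+s)$, Step~3 only restates what has to be done, and the final paragraph leaves the question open. The cause is that you approximate $\eta_0$ and $\eta_1,\dots,\eta_d$ at the same scale $\delta$. The residual $r=\tilde\eta_0-\sum_{i=1}^d m_i\tilde\eta_i$ is then $O(\delta)$ with no sign information. If $r\bullet\tilde\eta_0<0$, no nonnegative combination of vectors positively oriented with respect to $\tilde\eta_0$ equals $r$, so rescaling becomes the only escape. The missing observation is that $\eta_0$ and $\tilde\eta_0$ are both unit vectors; this is what makes the quantity $\zeta\bullet\tilde\eta_0$ in the paper's proof positive. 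Indeed,
\[
(\tilde\eta_0-\eta_0)\bullet\tilde\eta_0=1-\eta_0\bullet\tilde\eta_0=\tfrac12|\tilde\eta_0-\eta_0|^2>0\quad\text{if }\tilde\eta_0\neq\eta_0 .
\]
So one fixes $\tilde\eta_0\neq\eta_0$ \emph{first}. Only afterwards does one choose $\tilde\eta_1,\dots,\tilde\eta_d$ with $\sum_i m_i|\tilde\eta_i-\eta_i|<\tfrac12|\tilde\eta_0-\eta_0|^2$. Then $r\bullet\tilde\eta_0>0$ and the $m_i$ are untouched. It remains to write $r$ as a nonnegative combination of unit simple vectors in $Q(n,k)$ that are positively oriented with respect to $\tilde\eta_0$. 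Carath\'eodory's theorem then cuts this to at most $\binom nk$ terms.

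Two further points in your treatment of the cone need correcting.

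\textbf{$\eta_0$ is not extreme.} Rotate so that $\tilde\eta_0$ becomes $\xi_0$. The simple vectors positively oriented with respect to $\xi_0$ are then exactly the positive multiples of $\tbwedge M(X)$. No nonzero linear combination of minors of $X$ is bounded above, since it is a nonconstant polynomial affine in each entry. Hence $\conv\{\tbwedge M(X)\}$ is the whole hyperplane $\{\xi:\xi\bullet\xi_0=1\}$. For example, $\xi_0=\tfrac12(\tbwedge M(X)+\tbwedge M(-X))$ for rank-one $X$. So positively oriented simple vectors generate the full open half-space $\{\xi:\xi\bullet\tilde\eta_0>0\}$, and this is what makes the absorption possible.

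\textbf{The generators must be genuinely simple and adapted to $r$.} Your candidates $\eta_0+\tau e_\lambda$ are in general not simple, e.g.\ $e_1\wedge e_2+\tau e_3\wedge e_4$ in $\tbwedge_2\R^4$. Lemma~\ref{lem:SQ_dense} gives density only inside the closed set $S_{\R}$, so it provides no approximants of them in $Q(n,k)$. Moreover, $r\bullet\tilde\eta_0\approx\tfrac12|\tilde\eta_0-\eta_0|^2$ while $|r|\approx|\tilde\eta_0-\eta_0|$. The residual is therefore nearly orthogonal to $\tilde\eta_0$ and cannot lie in a cone of generators clustered near $\eta_0$, as in your Step~2.

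A clean ordering is as follows. After fixing $\tilde\eta_0$, choose finitely many simple generators positively oriented with respect to $\tilde\eta_0$ (rotated, normalised vectors $\tbwedge M(X)$) with $\tilde\eta_0-\eta_0$ in the interior of their cone. Perturb them into $Q(n,k)\cap\ograss nk$; interior points survive small perturbations of a spanning generating set. Only then choose the $\tilde\eta_i$ so close to the $\eta_i$ that $r$ stays in that cone. The vectors $t_j=|\tilde\eta_0|(e_1+w_j)$ in the paper's own argument likewise have to be taken simple for the density lemma to apply.
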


\begin{proof}
    Fix $\varepsilon>0$.
    For $\varepsilon$ sufficiently small, we claim that there exist elements
    $\tilde\eta_i\in Q(n,k)\cap\ograss nk$ for $i=0,\dots,d$ such that the stated
    estimates hold and, in addition,
    \[
        \tilde\eta_i\bullet\tilde\eta_0>0
        \quad\text{for }i=1,\dots,d,
        \qquad\text{and}\qquad
        \Bigl(\tilde\eta_0-\sum_{i=1}^d m_i\tilde\eta_i\Bigr)\bullet
        \frac{\tilde\eta_0}{|\tilde\eta_0|}>0.
    \]
    By Lemma~\ref{lem:SQ_dense}, we may choose
    $\tilde\eta_0\in Q(n,k)\cap\ograss nk$ such that
    \[
        |\tilde\eta_0-\eta_0|<\frac{\varepsilon}{2}
        \quad\text{and}\quad
        (\tilde\eta_0-\eta_0)\bullet\eta_0>0.
    \]
    Set $\zeta:=\tilde\eta_0-\eta_0$. Using Lemma~\ref{lem:SQ_dense} again, we 
    choose $\tilde\eta_1,\dots,\tilde\eta_d\in Q(n,k)\cap\ograss nk$ such that
    \[
        m_i\,|\tilde\eta_i-\eta_i|
        <
        \frac{1}{d}\,\zeta\bullet\tilde\eta_0,
        \qquad i=1,\dots,d.
    \]
    For $\varepsilon$ sufficiently small, this choice guarantees that
    $\tilde\eta_i\bullet\tilde\eta_0>0$ for $i=1,\dots,d$.
    Moreover,
    \begin{align*}
        \Bigl(\tilde\eta_0-\sum_{i=1}^d m_i\tilde\eta_i\Bigr)\bullet\tilde\eta_0
        &=
        (\eta_0+\zeta)\bullet\tilde\eta_0
        -\sum_{i=1}^d m_i(\tilde\eta_i-\eta_i)\bullet\tilde\eta_0
        -\eta_0\bullet\tilde\eta_0
        \\
        &=
        \zeta\bullet\tilde\eta_0
        -\sum_{i=1}^d m_i(\tilde\eta_i-\eta_i)\bullet\tilde\eta_0
        \\
        &\ge
        \zeta\bullet\tilde\eta_0
        -\sum_{i=1}^d m_i|\tilde\eta_i-\eta_i|
        >0,
    \end{align*}
    by the choice of the approximations.

    Define the residual
    \[
        r:=\tilde\eta_0-\sum_{i=1}^d m_i\tilde\eta_i.
    \]
    Let $\{e_1,\dots,e_M\}$ be an orthonormal basis of
    $\bigwedge_k\R^n$, where $M=\binom{n}{k}$, with
    \[
        e_1:=\frac{\tilde\eta_0}{|\tilde\eta_0|}.
    \]
    Then there exist $\beta>0$ and $w'\in\operatorname{span}\{e_1\}^\perp$ such that
    \[
        r=\beta e_1+w'.
    \]
    Choose $\rho>0$ such that $|w'|<\beta\rho$.
    Select vectors $w_1,\dots,w_{M-1}\in\operatorname{span}\{e_1\}^\perp$ satisfying
    \[
        B(0,2\rho)\subset\operatorname{conv}\{\pm w_1,\dots,\pm w_{M-1}\}.
    \]
    Define
    \[
        t_j:=|\tilde\eta_0|(e_1+w_j),\qquad j=1,\dots,M-1.
    \]
    By Lemma~\ref{lem:SQ_dense}, each $t_j$ admits a rational approximation $s_j$
    such that
    \[
        |s_j-t_j|<\varepsilon_1,
    \]
    where $\varepsilon_1>0$ is chosen sufficiently small.
    If $\varepsilon_1<|\tilde\eta_0|/2$, then
    \[
        s_j\bullet e_1>\frac{|\tilde\eta_0|}{2}>0,
    \]
    so each $s_j$ is positively oriented with respect to $\tilde\eta_0$.

    Define
    \[
        u_j:=\frac{s_j}{s_j\bullet e_1}-e_1.
    \]
    For $\varepsilon_1$ sufficiently small one has $|u_j-w_j|<\rho$, hence
    \[
        B(0,\rho)\subset\operatorname{conv}\{\pm u_1,\dots,\pm u_{M-1}\}.
    \]
    Since $|w'|<\beta\rho$, there exist coefficients $y_j\ge0$ with
    $\sum_j y_j=1$ such that
    \[
        \frac{w'}{\beta}=\sum_{j=1}^{2M-2} y_j u_j.
    \]
    Consequently,
    \[
        r=\sum_{j=1}^{2M-2}\Bigl(\beta\frac{y_j}{s_j\bullet e_1}\Bigr)s_j,
    \]
    which expresses $r$ as a positive linear combination of rational, positively 
    oriented $k$-vectors. By Carath\'eodory’s theorem \cite[Theorem~17.1.2]{Rockafellar1970},
    at most $M$ coefficients are nonzero. Setting $\tilde\eta_{d+j}:=s_j$ and 
    $m_{d+j}:=\beta y_j/(s_j\bullet e_1)$ concludes the proof.
\end{proof}

\begin{theorem}[\protect{cf.~\cite[3.3]{Rosa2023}}]
    \label{thm:antonio}
    Let $\eta_0 \in \ograss nk$ and let $\eta_1, \dots, \eta_d \in Q(n,k) \cap \ograss nk$ 
    be positively oriented with respect to $\eta_0$, with positive weights 
    $m_1, \dots, m_d \in (0,\infty)$ satisfying
    \[
        \sum_{i=1}^d m_i \, \eta_i = \eta_0.
    \]
    Let $E \in \ograss kk$ denote the standard orientation of $\R^k$, and let $p \in \orthproj nk$ satisfy $\tbwedge_k p^*(E) = \eta_0$. Define
    \[
        D := (p^*)_\# I_k \quad \text{and} \quad \mu := \sum_{i=1}^d m_i \, \Dirac{\eta_i}.
    \]
    Then, for every $\varepsilon > 0$, there exists a polyhedral $k$-current
    $B_{\varepsilon} \in \PC{k}(\R^n)$ with rational coefficients such that
    \[
        \partial B_{\varepsilon} = \partial D, \quad
        \|\gamma_{B_{\varepsilon}} - \mu\|_{\mathrm{TV}} < \varepsilon,
    \]
    and all tangent planes of $B_\varepsilon$ are positively oriented with respect 
    to $\eta_0$.
\end{theorem}

\begin{lemma}[\protect{cf.~\cite[3.4]{Rosa2023}}]\label{lemma:antonio}
    Let $[0,1]^k$ be the unit cube in $\R^k \subset \R^n$. Suppose 
    $B \in \PC{k}(\R^n)$ is a polyhedral $k$-current with rational 
    coefficients such that
    \[
        \partial B = \partial I_k,
    \]
    and $\HM^k$-almost every tangent plane to $B$ is positively oriented 
    with respect to $\R^k$. Then there exists an integer $Q \neq 0$ and 
    a $Q$-valued Lipschitz function
    \[
        f : [0,1]^k \longrightarrow \qspace{Q}(\R^{n-k})
    \]
    such that
    \[
        B = Q^{-1} G_f,
    \]
    where $G_f$ denotes the graph current associated to $f$.
\end{lemma}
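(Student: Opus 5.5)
Since $B$ has rational coefficients, we first clear denominators: choose a positive integer $Q$ with $QB$ an integral polyhedral chain. Then $QB$ is an integer multiplicity polyhedral current with $\partial(QB) = Q\,\partial I_k$, and $\HM^k$-almost every tangent plane of $QB$ is positively oriented with respect to $\R^k$. We may decompose $QB$ into finitely many oriented $k$-simplices $\sigma_1,\dots,\sigma_N$ with positive integer multiplicities $\theta_l$. We choose the triangulation fine enough that each simplex is a graph over its orthogonal projection $\pi(\sigma_l)\subset\R^k$, where $\pi:\R^n\to\R^k$ is the orthogonal projection. Positive orientation ensures that $\pi|_{\sigma_l}$ is injective. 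After reorienting if necessary, $\pi_\#$ maps each $\sigma_l$ to $\Lbrack \pi(\sigma_l)\Rbrack$ with positive orientation and multiplicity $\theta_l$. Hence $\pi_\#(QB)$ is a $k$-current in $\R^k$ with boundary $Q\,\partial I_k$.

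The key step is the constancy theorem~\cite[4.1.7]{Federer1969}. Applied to $\pi_\#(QB) - Q I_k$, which has zero boundary in $\R^k$ and compact support, it gives $\pi_\#(QB) = Q I_k$. Since all projected simplices carry positive orientation, no cancellation occurs. Therefore, for $\LM^k$-a.e.\ $x\in[0,1]^k$, the sum $\sum_{l:\,x\in\pi(\sigma_l)}\theta_l$ equals $Q$, and it vanishes for a.e.\ $x$ outside the cube. Thus $\spt B$ projects into $[0,1]^k$. Over a.e.\ point $x$ we define
\[
    f(x) := \sum_{l:\,x\in \pi(\sigma_l)} \theta_l \Lbrack g_l(x) \Rbrack \in \qspace{Q}(\R^{n-k}),
\]
where $g_l:\pi(\sigma_l)\to\R^{n-k}$ is the affine map whose graph is $\sigma_l$. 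We then extend $f$ to the lower-dimensional skeleton of the projected triangulation.

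The main obstacle is showing that $f$ is well defined and Lipschitz across the projected faces, and not merely piecewise affine. The plan is to use $\partial(QB)=Q\,\partial I_k$: interior $(k-1)$-faces of $QB$ lying over the open cube have zero total boundary multiplicity. So the sheets of the graph glue along faces, possibly with branching, but without tears. Continuity then implies that $f$ is continuous with values in $\qspace{Q}$, with the same value from every adjacent cell. On each cell $f$ is a superposition of affine maps, so we obtain the bound $\Lip(f)\le\max_l \Lip(g_l)$. Finally, we identify $G_f$ with $QB$ using Lemma~\ref{prop:1.4}, applied to the decomposition into the cells $\pi(\sigma_l)$. On each cell the graph current $G_f$ is $\sum_l \theta_l (\mathrm{id}\times g_l)_\#\Lbrack\pi(\sigma_l)\Rbrack = \sum_l\theta_l\sigma_l$. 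This gives $B=Q^{-1}G_f$, and properness is automatic because the domain is compact.
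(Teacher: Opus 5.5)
The paper gives no proof of this lemma; it is imported from \cite[3.4]{Rosa2023}, so there is no in-paper argument to compare with. Your overall route is the natural one and is sound. You clear denominators and represent $QB$ by non-overlapping simplices with positive integer weights; each simplex is then automatically the graph of an affine $g_l$ over $\pi(\sigma_l)$. The constancy theorem gives $\pi_\#(QB)=Q\,I_k$, you stack the sheets, and you read off $G_f=QB$ from Lemma~\ref{prop:1.4}. One correction here: the ``cells'' must be those of the common refinement of the overlapping sets $\pi(\sigma_l)$, not a ``projected triangulation''.

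The step you yourself call the main obstacle is, however, only asserted. ``Continuity then implies that $f$ is continuous'' is not an argument. Face-by-face cancellation of $\partial(QB)$ directly controls only fiber points in the relative interior of a single $(k-1)$-face. Over a given $y$ there may be several fiber points at different heights, some on lower-dimensional faces, and projections of different faces may cross at $y$.

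A clean fix is to localize the constancy theorem.
\begin{itemize}
\item For $y\in(0,1)^k$, let $(y,z_1),\dots,(y,z_r)$ be the finitely many points of $\pi^{-1}\{y\}\cap\spt B$, and choose $\delta>0$ below every $|z_s-z_{s'}|$ with $s\neq s'$.
\item Let $U\subset(0,1)^k$ be an open ball about $y$ of radius $\rho$, so small that every $\sigma_l$ with $\pi(\sigma_l)\cap U\neq\emptyset$ satisfies $y\in\pi(\sigma_l)$, and $\rho\max_l\Lip g_l<\delta/3$.
\item Then $QB\restrict\pi^{-1}(U)$ splits into pieces $T_s$ carried by the disjoint cylinders $U\times\{z:|z-z_s|<\delta/2\}$.
\item Each $T_s$ has no boundary over $U$, since $\spt\partial(QB)\subset\partial[0,1]^k\times\{0\}$. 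Hence $\pi_\#T_s=m_s\Lbrack U\Rbrack$ on $U$.
\item So every cell adjacent to $y$ carries exactly $m_s$ weighted sheets through the $s$-th cylinder, all converging to $z_s$. Thus $f(x)\to\sum_s m_s\Lbrack z_s\Rbrack$ as $x\to y$ from every cell.
\end{itemize}
This gives continuity on $(0,1)^k$, hence the formula $f=\sum_{l:\,C\subset\pi(\sigma_l)}\theta_l\Lbrack g_l\Rbrack$ on each closed cell $C$ (within the open cube). The global Lipschitz bound follows by cutting segments of the convex cube at cell boundaries, and $f$ then extends to $[0,1]^k$.

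Note that with respect to $\mathcal{G}$ this bound is $\sqrt{Q}\,\max_l\Lip g_l$, not $\max_l\Lip g_l$; consider $f=Q\Lbrack Lx\Rbrack$.

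The same localization at $y\in\partial[0,1]^k$ shows that a piece over some $z_s\neq0$ would have constant multiplicity on a ball sticking out of the cube, and hence cannot occur. So $f=Q\Lbrack 0\Rbrack$ on $\partial[0,1]^k$. The statement does not claim this, but it is exactly what Theorem~\ref{thm:main2} needs in order to pair $G_f$ with $h=Q\Lbrack 0\Rbrack$ as a graph test pair.
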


\begin{theorem}
    \label{thm:main2}
    Assume $\Psi$ is a Lipschitz geometric integrand and 
    $0 < c_{2} < c_{1} < \infty$. If 
    $\Psi \in \AUE(\mathcal{P}_G, c_{1})$, then 
    $\Psi \in \UPC^{+}(c_{2})$.
\end{theorem}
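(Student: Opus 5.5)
Fix $\eta_0 \in \ograss nk$, weights $m_1,\dots,m_d>0$ and $\eta_1,\dots,\eta_d \in \ograss{n}{k}^{+}_{\eta_0}$ with $\sum_i m_i\eta_i=\eta_0$. We must show
\[
    \sum_{i=1}^d m_i\Psi(\eta_i)-\Psi(\eta_0)\ge c_2\Bigl(\sum_{i=1}^d m_i-1\Bigr).
\]
The plan follows the De Rosa--Lei--Young scheme: reduce to rational data, realise the data as a polyhedral competitor, turn that competitor into a multigraph, and apply $\AUE(\mathcal{P}_G,c_1)$. The margin $c_1-c_2>0$ absorbs all approximation errors.

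\textbf{Step 1 (rational reduction).} Given $\varepsilon>0$, apply Lemma~\ref{lemma:approximate} to obtain $\tilde\eta_0,\dots,\tilde\eta_N\in Q(n,k)\cap\ograss nk$ with $\tilde\eta_0=\sum_{i=1}^N m_i\tilde\eta_i$. Here $m_i\ge0$ for the extra indices $i>d$, and every $\tilde\eta_i$ is positively oriented with respect to $\tilde\eta_0$. Taking the norm of $\tilde\eta_0-\sum_{i\le d}m_i\tilde\eta_i$, the extra mass satisfies
\[
    \sum_{i>d}m_i\le \Bigl|\tilde\eta_0-\sum_{i\le d}m_i\tilde\eta_i\Bigr|\le|\tilde\eta_0-\eta_0|+\sum_{i\le d}m_i|\tilde\eta_i-\eta_i|<\varepsilon.
\]
This uses the fact that the extra terms lie in the open half-space $\{\xi:\xi\bullet\tilde\eta_0>0\}$. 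If this norm bound fails for the construction in the lemma, I would instead bound the extra terms by $\sum_{i>d}m_i(\tilde\eta_i\bullet\tilde\eta_0)=r\bullet\tilde\eta_0<\varepsilon$ together with a uniform lower bound on $s_j\bullet e_1$, which the construction provides. Since $\Psi$ is Lipschitz, both sides of the desired inequality for the tilde data differ from those for the original data by $O(\varepsilon)$. It therefore suffices to prove the inequality for rational data, with constant $c_1$ up to an error $\delta$, and then let $\varepsilon\to0$.

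\textbf{Step 2 (polyhedral competitor).} Rotate so that $\tilde\eta_0=\xi_0$; this is harmless because the conditions are invariant under the choice of the cube's plane in $\mathcal{P}_G(\eta_0)$. Theorem~\ref{thm:antonio} then gives $B_\delta\in\PC k(\R^n)$ with rational coefficients such that $\partial B_\delta=\partial D$, $\|\gamma_{B_\delta}-\mu\|_{\mathrm{TV}}<\delta$, and all tangent planes positively oriented. By Lemma~\ref{lemma:antonio}, $B_\delta=Q^{-1}G_f$ for some Lipschitz $Q$-valued map $f$ on the unit cube.

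\textbf{Step 3 (apply ellipticity).} Take $h=Q\llbracket 0\rrbracket$, so that $G_h=QD$. Then $(G_f,G_h)$ is a graph test pair, and $f=h$ on $\partial D$. This should follow from $\partial G_f=\partial(QD)$ together with the graph structure over the boundary. It is the step I expect to be the main obstacle: one must check that the construction of Lemma~\ref{lemma:antonio} yields boundary values exactly $Q\llbracket0\rrbracket$ and not merely equal boundary currents. I would verify this by inspecting the cited construction, where $B$ agrees with $I_k$ near $\partial I_k$ after a standard collar modification. If only $\partial G_f=\partial G_h$ is available, I would modify the argument in a thin collar, at a cost of $o(1)$ in mass and energy.

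With the test pair in hand, $\AUE(\mathcal{P}_G,c_1)$ gives $E_\Psi(G_f)-E_\Psi(QD)\ge c_1(\Mass(G_f)-Q)$. Divide by $Q$ and use
\[
    E_\Psi(B_\delta)=\int\Psi\ud\gamma_{B_\delta},\qquad \Mass(B_\delta)=\gamma_{B_\delta}(\ograss nk).
\]
Together with the total-variation estimate and boundedness of $\Psi$, this yields
\[
    \sum_i m_i\Psi(\tilde\eta_i)-\Psi(\tilde\eta_0)\ge c_1\Bigl(\sum_i m_i-1\Bigr)-C\delta.
\]
Let $\delta\to0$, then return to the original data via Step 1, using $c_2<c_1$ to dominate the $O(\varepsilon)$ errors. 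Since $\sum m_i-1\ge0$, replacing $c_1$ by $c_2$ preserves the inequality, and this yields $\Psi\in\UPC_{\eta_0}(c_2)$ for every $\eta_0$.
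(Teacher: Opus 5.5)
Your proposal follows the paper's route. You reduce to rational data via Lemma~\ref{lemma:approximate}, build a polyhedral competitor from Theorem~\ref{thm:antonio}, convert it to a multigraph via Lemma~\ref{lemma:antonio}, compare with $h=Q\llbracket 0\rrbracket$ under $\mathrm{AUE}(\mathcal{P}_G,c_1)$, and finish with total-variation estimates. The one structural difference is that you pass to the limit twice ($\delta\to0$ for fixed rational data, then $\varepsilon\to0$). The paper instead argues by contradiction with a single $\varepsilon$ calibrated by $c_1-c_2$, $L$ and $\sup\Psi$. Your version gives $\Psi\in\UPC^{+}(c_1)$ directly and needs only uniform continuity of $\Psi$ on $\ograss{n}{k}$. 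So neither the gap $c_2<c_1$ nor the later appeal to Remark~\ref{lemma:plane_wise_upc} is needed.

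Two details need comment. First, your primary bound $\sum_{i>d}m_i\le|r|$, with $r=\tilde\eta_0-\sum_{i\le d}m_i\tilde\eta_i$, is false. For unit vectors the triangle inequality gives $|r|\le\sum_{i>d}m_i$, and positive orientation does not reverse it: for $k=1$, the vectors $\cos\theta\,e_1\pm\sin\theta\,e_2$ with $\theta$ near $\pi/2$ are positively oriented, yet their sum has length $2\cos\theta\ll 2$. Only your fallback works. It rests on the proof of Lemma~\ref{lemma:approximate}, not its statement: you need $s_j\bullet e_1>\tfrac12$, together with choices of $\rho$ and $w_j$ that keep the extra mass $O(|r|)$. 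The paper needs the same control and leaves it inside ``repeating verbatim the estimates of \cite[5.10]{Lesniak_2025}''.

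Second, the boundary condition you flag is asserted without comment in the paper, and it holds without any collar modification. Suppose $0\neq y_0\in\spt f(x_0)$ for some $x_0\in\partial[0,1]^k$. Take a thin cylinder $B^k(x_0,s)\times B^{n-k}(y_0,t)$ with $s\ll t<|y_0|$ small. Since $\spt\partial G_f\subset\partial[0,1]^k\times\{0\}$, the current $G_f$ restricted to this cylinder has boundary only on the lateral side. Its projection to $\R^k$ is then a positive integer multiple of $\llbracket[0,1]^k\rrbracket$ on $B^k(x_0,s)$ with no boundary in that ball. The constancy theorem forbids this, because $x_0$ lies on $\partial[0,1]^k$.
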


\begin{proof}
    We argue by contradiction. Assume that 
    $\Psi \in \AUE(\mathcal{P}_G, c_{1})$ is Lipschitz with 
    $\Lip \Psi = L < \infty$ and that $\Psi \notin \UPC^{+}(c_{2})$.
    By Kirszbraun's theorem~\cite[2.10.43]{Federer1969}, $\Psi$ can be extended 
    to all of $\tbwedge_k \R^n$ preserving the Lipschitz constant; we 
    denote this extension again by $\Psi$.

    Since $\Psi \notin \UPC^{+}(c_{2})$, there exist $d \in \nat$,
    $m_1, \dots, m_d > 0$, and $\eta_0, \eta_1, \dots, \eta_d \in \ograss nk$, with
    $\eta_1, \dots, \eta_d$ positively oriented with respect to $\eta_0$, such that
    \[
        \sum_{i=1}^d m_i \eta_i = \eta_0,
        \qquad
        \sum_{i=1}^d m_i \Psi(\eta_i) - \Psi(\eta_0)
        < c_2 \left( \sum_{i=1}^d m_i |\eta_i| - |\eta_0| \right).
    \]
    Set
    \[
        M := \sup \im \Psi, \qquad
        \varepsilon :=
        \frac{\frac12 (c_1 - c_2)\bigl(\sum_{i=1}^d m_i - 1\bigr)}
        {\frac32 L + M + M \binom{n}{k}^{1/2} + c_1}.
    \]
    By Lemma~\ref{lemma:approximate}, there exist
    $\tilde{\eta}_0, \dots, \tilde{\eta}_{N} \in Q(n,k) \cap \ograss nk$ such that
    \[
        |\eta_i - \tilde{\eta}_i| < \frac{\varepsilon}{2 d m_i} \quad (i=1,\dots,d), \qquad
        |\eta_0 - \tilde{\eta}_0| < \frac{\varepsilon}{2},
    \]
    and
    \[
        \tilde{\eta}_0 = \sum_{i=1}^{N} m_i \tilde{\eta}_i,
    \]
    with all $\tilde{\eta}_i$ positively oriented with respect to $\tilde{\eta}_0$.
    Define the measures
    \[
        \mu := \sum_{i=1}^d m_i \Dirac{\eta_i},
        \qquad
        \tilde{\mu} := \sum_{i=1}^{N} m_i \Dirac{\tilde{\eta}_i}.
    \]
    By Theorem~\ref{thm:antonio}, there exist polyhedral $k$-currents
    $B_\varepsilon, D \in \PC{k}(\R^n)$ with rational coefficients such 
    that
    \[
        \vec{D} = \tilde{\eta}_0, \qquad
        \spt D \text{ is a unit cube in } \asssp \tilde{\eta}_0, \qquad
        \partial B_\varepsilon = \partial D,
    \]
    and
    \[
        \|\gamma_B - \tilde{\mu}\|_{\mathrm{TV}} < \varepsilon.
    \]
    Let $R \in SO(n)$ be a rotation such that $R_{\#} D = \llbracket [0,1]^k \rrbracket$.
    Then $R_{\#} B$ is a polyhedral $k$-current with rational coefficients satisfying
    \[
        \partial (R_{\#} B) = \partial \llbracket [0,1]^k \rrbracket,
    \]
    and all tangent planes of $R_{\#} B$ are positively oriented with respect to
    $\R^k \subset \R^n$.
    By Lemma~\ref{lemma:antonio}, there exists a $Q$-valued Lipschitz function
    \[
        f : [0,1]^k \to \qspace{Q}(\R^{n-k})
    \]
    such that
    \[
        G_f = Q\, R_{\#} B_\varepsilon.
    \]
    Define the trivial $Q$-valued function
    \[
        h : [0,1]^k \to \qspace{Q}(\R^{n-k}),
        \qquad h(x) := Q \llbracket 0 \rrbracket.
    \]
    Rotating back, define
    \[
        \tilde{f}(x) := f(Rx), \qquad
        \tilde{h}(x) := h(Rx),
        \quad x \in R^{-1}([0,1]^k) \subset \asssp \tilde{\eta}_0,
    \]
    so that the associated graph currents satisfy
    \[
        G_{\tilde{f}} = (R^{-1})_{\#} G_f = Q B_\varepsilon,
        \qquad
        G_{\tilde{h}} = (R^{-1})_{\#} G_h = Q D.
    \]
    Since $(G_{\tilde{f}}, G_{\tilde{h}}) \in \mathcal{P}_G$ and
    $\Psi \in \AUE(\mathcal{P}_G, c_1)$, we obtain
    \[
        E_{\Psi}(G_{\tilde{f}}) - E_{\Psi}(G_{\tilde{h}})
        \ge c_1 \bigl( \Mass(G_{\tilde{f}}) - \Mass(G_{\tilde{h}}) \bigr).
    \]
    On the other hand, repeating verbatim the estimates of
    \cite[5.10]{Lesniak_2025} yields
    \begin{multline}
        c_{2}\bigl( \sum_{i=1}^d m_i - 1 \bigr)
        + \varepsilon \bigl( \tfrac32 L + M + M \binom{n}{k}^{1/2} \bigr)
        > E_{\Psi}(G_{\tilde{f}}) - E_{\Psi}(G_{\tilde{h}})
        > c_{1}\bigl( \sum_{i=1}^d m_i - 1 \bigr) - c_{1}\varepsilon.
    \end{multline}
    This is impossible by the choice of $\varepsilon$. Hence, the assumption that
    $\Psi \notin \UPC^{+}(c_2)$ is false, and therefore
    $\Psi \in \UPC^{+}(c_2)$.
\end{proof}

\begin{theorem}\label{corollary:thm2}
    Assume $\Psi$ is a Lipschitz geometric integrand, and let $c > 0$. Then
    \[
        \Psi \in \AUE(\mathcal{P}_G, c)
        \quad \Longleftrightarrow \quad
        \Psi \in \UPC^{+}(c).
    \]
\end{theorem}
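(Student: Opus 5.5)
The proof has two directions. The forward implication follows from Theorem~\ref{thm:main2} together with Remark~\ref{lemma:plane_wise_upc}(ii). The converse is a Jensen-type argument in the spirit of the proof of Theorem~\ref{thm:main1}.

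\emph{Forward implication.} Assume $\Psi \in \AUE(\mathcal{P}_G, c)$ and fix any $c' < c$. Applying Theorem~\ref{thm:main2} with $c_1 = c$ and $c_2 = c'$ gives $\Psi \in \UPC^{+}(c')$. Since $c'<c$ was arbitrary, Remark~\ref{lemma:plane_wise_upc}(ii) yields $\Psi \in \UPC^{+}(c)$.

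\emph{Converse implication.} Assume $\Psi \in \UPC^{+}(c)$ and let $(G_f, G_h) \in \mathcal{P}_G(\eta)$, with $h = \sum_j Q_j \Lbrack a_j + L_j x \Rbrack$ on a unit cube $D \subset \asssp \eta$.

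\textbf{Step 1: decompose $h$ into sheets.} Write $G_h = \sum_j Q_j G_{h_j}$, where each $G_{h_j}$ is the graph of an affine map over $D$. Its constant orientation is
\[
\zeta_j = \tbwedge M(L_j)/|\tbwedge M(L_j)|, \qquad \Mass(G_{h_j}) = |\tbwedge M(L_j)|,
\]
with coordinates taken relative to $\asssp\eta$.

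\textbf{Step 2: decompose $f$ over the sheets.} Using Lemma~\ref{lem:1.1}, decompose $f$ a.e. into Lipschitz branches. The boundary condition $f = h$ on $\partial D$ should force $f$ to split into groups of $Q_j$ branches attached to the sheet $a_j + L_j x$. This is delicate: branches of $f$ may cross between sheets in the interior, and sheets may themselves intersect.

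To avoid needing such a splitting, I would work with the total $k$-vector instead. By Stokes' theorem and $\partial G_f = \partial G_h$, the currents $G_f - G_h$ form a cycle in the contractible region. Hence
\[
\int \vec G_f \ud\|G_f\| = \int \vec G_h \ud\|G_h\| = \sum_j Q_j \Mass(G_{h_j})\,\zeta_j ,
\]
because the integral of the orientation equals the action on constant forms.

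\textbf{Step 3: Jensen with $G = \Psi - c|\cdot|$.} Every tangent plane of $G_f$ is a graph plane, so it is positively oriented with respect to $\eta$. Hence, as in Theorem~\ref{thm:main1}, polyconvexity of $G$ on the cone $(\tbwedge_k\R^n)^+_\eta$ gives
\[
\int G(\vec G_f)\ud\|G_f\| \ge G\Bigl(\int \vec G_f \ud \|G_f\|\Bigr).
\]
Applying this requires extending the finite-sum definition to measures by approximating $\gamma_{G_f}$ with finite atomic measures and using continuity.

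\textbf{Step 4: compare with $G_h$ (main obstacle).} The difficulty is that $\int G(\vec G_h)\ud\|G_h\| = \sum_j Q_j \Mass(G_{h_j})\,G(\zeta_j)$, which in general exceeds $G\bigl(\sum_j Q_j \Mass(G_{h_j})\zeta_j\bigr)$ when $J>1$. So the Jensen bound from Step 3 is too weak by itself.

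To resolve this, I must use the sheet decomposition after all. The plan is to show that $\gamma_{G_f}$ splits as $\sum_j \gamma_j$ with barycenter of $\gamma_j$ equal to $Q_j \Mass(G_{h_j})\,\zeta_j$, and then apply Jensen sheetwise using $\UPC_{\zeta_j}(c)$. Here the hypothesis $\UPC^{+}$ (all reference planes) is essential. I expect that the boundary matching together with the properness of $f$, via a decomposition of $G_f$ into components attached to each sheet boundary, gives this splitting.

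If the splitting fails because branches switch sheets, then the definition of test pairs seems to require $J=1$ in practice, and I would reduce to that case. In that case Steps 2 and 3 conclude directly:
\[
E_\Psi(G_f) - c\Mass(G_f) \ge E_\Psi(G_h) - c\Mass(G_h).
\]
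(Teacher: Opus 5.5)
Your forward implication is the paper's: Theorem~\ref{thm:main2} with $c_1=c$, $c_2=c'<c$, followed by Remark~\ref{lemma:plane_wise_upc}(ii). For the converse, the paper only says ``repeat the argument of Theorem~\ref{thm:main1}''. That is your Step~3, with $G_h$ treated as a single flat disc oriented by the reference plane. Your Step~4 correctly identifies what this glosses over. However, the repair you propose, splitting $\gamma_{G_f}$ sheet by sheet, cannot work, and no argument can close the gap. The reason is that $\UPC^{+}(c)\Rightarrow\AUE(\mathcal{P}_G,c)$ is false once $h$ has sheets with different slopes, which Definition~\ref{def:graph_test_pair} allows.

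Take $k=1$, $n=2$, $\eta=e_1$, $D=[0,1]$, $h(x)=\Lbrack x-\tfrac12\Rbrack+\Lbrack\tfrac12-x\Rbrack$ and $f\equiv\Lbrack-\tfrac12\Rbrack+\Lbrack\tfrac12\Rbrack$.
\begin{enumerate}
\item Then $f=h$ on $\partial D=\{0,1\}$, so $(G_f,G_h)\in\mathcal{P}_G$.
\item Yet $\Mass(G_f)=2<2\sqrt2=\Mass(G_h)$. The competitor simply reconnects the endpoints of the two crossing sheets, so its branches do not follow the sheets of $h$.
\item The area integrand $\Psi(\xi)=|\xi|$ lies in $\UPC(c)\subseteq\UPC^{+}(c)$ for every $c\le1$, since $\sum_i m_i\ge|\eta_0|=1$.
\item For this $\Psi$, $\AUE(\mathcal{P}_G,c)$ reads $(1-c)\bigl(\Mass(G_f)-\Mass(G_h)\bigr)\ge0$, which fails for $c<1$.
\end{enumerate}
This is not a one-dimensional artefact. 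For $k=2$, $n=3$, take $h=\Lbrack x_1-\tfrac12\Rbrack+\Lbrack\tfrac12-x_1\Rbrack$ on $[0,1]^2$ and $f=\Lbrack\phi\Rbrack+\Lbrack-\phi\Rbrack$ with $\phi=\max\{|x_1-\tfrac12|,\varepsilon\min\{x_2,1-x_2\}\}$, $0<\varepsilon<1$. Then $f=h$ on $\partial[0,1]^2$ and $\Mass(G_f)=2\sqrt2-\varepsilon\bigl(\sqrt2-\sqrt{1+\varepsilon^2}\bigr)<2\sqrt2=\Mass(G_h)$.

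So your fallback of ``reducing to $J=1$'' is not available; the statement itself must be restricted. Even for $J=1$, your Step~3 is justified only when $L_1=0$. Otherwise the barycentre of $\gamma_{G_f}$ is a multiple of $\zeta_1\neq\eta$, and the hypothesis $\UPC^{+}(c)$ gives Jensen at $\zeta_1$ only for measures carried by $\zeta_1$-positive planes. A graph plane over $\eta$ need not be one, since $\tbwedge M(X)\bullet\tbwedge M(L_1)=\det(\id{k}+X^{T}L_1)$ can be negative (e.g.\ $k=1$, $L_1=1$, $X=-2$).

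What your Steps~2--3 (and the paper's one-line argument) do prove is the converse for pairs with all $L_j=0$. This requires the approximation of $\gamma_{G_f}$ by finitely supported measures with the same barycentre, which you flag; it is feasible because the tangent planes of a Lipschitz graph lie in a compact part of the $\eta$-positive cone. In that case:
\begin{enumerate}
\item $\int\vec G_h\ud\|G_h\|=Q\eta$;
\item $E_\Psi(G_h)-c\Mass(G_h)=Q(\Psi(\eta)-c)$;
\item all tangent planes of $G_f$ are $\eta$-positive, so $\UPC_{\eta}(c)$ applies.
\end{enumerate}
The proof of Theorem~\ref{thm:main2} only uses pairs with $h=Q\Lbrack0\Rbrack$. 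Hence the equivalence survives if $\mathcal{P}_G$ is restricted in this way, but not for arbitrary affine $h$ as in the statement.
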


\begin{proof}
    Suppose first that $\Psi \in \AUE(\mathcal{P}_G, c)$.
    Combining Theorem \ref{thm:main2} with Remark \ref{lemma:plane_wise_upc}, 
    we have
    \[
        \Psi \in \UPC^{+}(c).
    \]
    Conversely, assume $\Psi \in \UPC^{+}(c)$, and let 
    $(G_f, G_h) \in \mathcal{P}_G$ be a graph test pair. Since $(G_f, G_h)$ is 
    a test pair, repeating the same argument as in \ref{thm:main1} shows that 
    $\Psi \in \AUE(\mathcal{P}, c)$.
\end{proof}

\begin{remark}\label{rem:plane_wise_equivalence}
    Fix $\eta_0 \in \ograss{n}{k}$. Using the same arguments as in 
    Lemma~\ref{lemma:plane_wise_upc} and Theorem~\ref{thm:main2}, one obtains 
    the analogue of Theorem~\ref{corollary:thm2}. That is, the 
    uniform ellipticity of $\Psi$ on graphs over a single plane $\eta_0$ is 
    equivalent to the plane-wise uniform polyconvexity i.e.
    \[
        \Psi \in \AUE(\mathcal{P}_G(\eta_0), c)
        \quad \Longleftrightarrow \quad
        \Psi \in \UPC_{\eta_0}(c).
    \]
\end{remark}

\section{$Q$-Energies}\label{section:q_energies}

\begin{definition}[\protect{\cite[0.1]{DeLellis2011a}}]
A measurable map
\[
    F \colon (\R^{n-k})^Q \times (\R^{(n-k)\times k})^Q \to \R
\]
is called a \emph{$Q$-integrand} if it is invariant under permutations, i.e.,
for every permutation $\sigma$ of $\{1,\ldots,Q\}$,
\[
    F(a_1,\ldots,a_Q, A_1,\ldots,A_Q)
    =
    F\bigl(a_{\sigma(1)},\ldots,a_{\sigma(Q)},
           A_{\sigma(1)},\ldots,A_{\sigma(Q)}\bigr).
\]
\end{definition}

\begin{remark}
Let
    $f \colon [0,1]^k \to \qspace{Q}(\R^{\,n-k})$ be a proper 
    Lipschitz $Q$-valued function. By the decomposition lemma \ref{lem:1.1}, 
    its differential $\uD f(x)$ is an element of $\qspace{Q}(\R^{(n-k)\times k})$.
    More precisely, if
    \[
        f(x) = \sum_{i=1}^Q \llbracket f_i(x) \rrbracket,
    \]
    then
    \[
        \uD f(x) = \sum_{i=1}^Q \llbracket \uD f_i(x) \rrbracket.
    \]
\end{remark}

\begin{definition}
    Let $F$ be a $Q$-integrand. For a $Q$-valued function
    $f \colon [0,1]^k \to \qspace{Q}(\R^{\,n-k})$ with decomposition
    $f = \sum_{i=1}^Q \llbracket f_i \rrbracket$,
    we define the associated \emph{anisotropic $Q$-energy functional} by
    \[
        E_F(f)
        :=
        \int_{[0,1]^k}
        F\bigl(f_1(x),\ldots,f_Q(x),
        \uD f_1(x),\ldots,\uD f_Q(x)\bigr)\,\ud x.
    \]
\end{definition}

\begin{remark}
    The symmetry of $F$ ensures that the definition of $E_F(f)$ is independent 
    of the chosen decomposition of $f$.
\end{remark}

\begin{remark}\label{rem:geom_Q_integrand}
    Every geometric integrand
    \[
        \Psi : \ograss{n}{k}^{+} \to \R
    \]
    induces a $Q$-integrand in a natural way. Let 
    $\psi := \Psi \circ \tbwedge M$, and define
    \[
        \bar{\psi}_Q(X_1,\ldots,X_Q)
        :=
        \sum_{i=1}^Q \psi(X_i)
        \quad
        \text{for}
        \quad
        (X_1,\ldots,X_Q) \in (\R^{(n-k)\times k})^Q.
    \]
    Then the associated anisotropic $Q$-energy depends only on the differential
    of $f$ and is given by
    \[
        E_{\bar{\psi}_Q}(f)
        =
        \int_{[0,1]^k}
            \bar{\psi}_Q\bigl(\uD f(x)\bigr) \ud x \,.
    \]
\end{remark}

\section{Uniform Quasiconvexity}\label{section:uniform_quasiconvexity}

\begin{definition}[\protect{cf.~\cite[0.1]{DeLellis2011a}}]
    \label{def:uniform_quasiconvexity}
    Let
    \[
        F : (\R^{n-k})^Q \times (\R^{(n-k)\times k})^Q \to \R
    \]
    be a locally bounded $Q$--integrand.
    We say that $F$ is \emph{uniformly quasiconvex with constant $c>0$} if the
    following holds: for every decomposition $Q = Q_1 + \cdots + Q_J$, every 
    choice of vectors $a_j \in \R^{n-k}$ with $a_i \neq a_j$ for 
    $i \neq j$, and every $L_j \in \R^{(n-k)\times k}$, define the
     affine $Q$--valued map
    \[
        h(x) := \sum_{j=1}^J Q_j \llbracket a_j + L_j x \rrbracket .
    \]
    For every collection of Lipschitz maps
    \[
        f_j : [0,1]^k \to \qspace{Q_j}(\R^{n-k}),
        \qquad
        f_j|_{\partial [0,1]^k} = Q_j \llbracket a_j + L_j x \rrbracket ,
    \]
    set
    \[
        f(x) := \sum_{j=1}^{J} f_j(x).
    \]
    Then
    \[
        E_F(f) - E_F(h)
        \;\ge\;
        c \bigl( \Mass(G_f) - \Mass(G_h) \bigr),
    \]
    where $G_f$ and $G_h$ denote the graphs of $f$ and $h$, respectively.
    In this case we write $F \in \UQC(c; Q)$.
\end{definition}

\begin{remark}\label{rem:aue_uqc}
    Let $\bar{\psi}_Q$ be as in Remark~\ref{rem:geom_Q_integrand}.  
    By the area formula for $Q$-valued graphs \cite[1.11]{delellis2013multiplevaluedfunctionsintegral},  
    \[
        E_{\bar{\psi}_Q}(f) = E_{\Psi}(G_f)
    \] 
    for every Lipschitz $Q$-valued function $f$.  
    Accordingly, the condition $\bar{\psi}_Q \in \UQC(c; Q)$ for all $Q \in \natp$  
    is equivalent to 
    \[
        \Psi \in \AUE(\mathcal{P}_G(\xi_0), c),
    \] 
    with $\xi_0 := e_1 \wedge e_2 \wedge \dots \wedge e_k$.
\end{remark}

\begin{theorem}\label{thm:main3}
    Let $\Psi : \ograss{n}{k}^{+} \to \R$ be a Lipschitz geometric 
    integrand, and let $\psi := \Psi \circ \tbwedge M$ be the associated 
    classical integrand. For each $Q \in \natp$, let $\bar{\psi}_Q$ denote 
    the corresponding $Q$-integrand as in Remark~\ref{rem:geom_Q_integrand}. 
    Then, for any constant $c > 0$
    \[
        \psi \in \UPC^{\text{cl}}_{\xi_0}(c) \quad \Longleftrightarrow \quad
        \bar{\psi}_Q \in \UQC(c; Q) \text{ for every } Q \in \natp.
    \]
\end{theorem}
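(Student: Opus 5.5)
The proof is a chain of three equivalences already available in the paper:
\[
    \psi \in \UPC^{\text{cl}}_{\xi_0}(c)
    \;\Longleftrightarrow\;
    \Psi \in \UPC_{\xi_0}(c)
    \;\Longleftrightarrow\;
    \Psi \in \AUE(\mathcal{P}_G(\xi_0), c)
    \;\Longleftrightarrow\;
    \bar{\psi}_Q \in \UQC(c;Q) \text{ for every } Q \in \natp .
\]
The first equivalence is Remark~\ref{rem:class_geom_poly_equivalence}, because $\psi = \Psi \circ \tbwedge M$ by construction. The second is the plane-wise version of Theorem~\ref{corollary:thm2}, recorded in Remark~\ref{rem:plane_wise_equivalence} with $\eta_0 = \xi_0$. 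The third is Remark~\ref{rem:aue_uqc}. I would check the second and third links in more detail, since both are stated only as remarks.

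For the third link, I would first use the area formula for $Q$-graphs to get $E_{\bar{\psi}_Q}(f) = E_{\Psi}(G_f)$ for every Lipschitz $f : [0,1]^k \to \qspace{Q}(\R^{n-k})$. This uses that the orienting $k$-vector of the graph of a single-valued Lipschitz $f_i$ is $\tbwedge M(\uD f_i)/|\tbwedge M(\uD f_i)|$, with Jacobian $|\tbwedge M(\uD f_i)|$, together with positive homogeneity of $\Psi$.

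Next I would match the test classes. Every competitor in Definition~\ref{def:uniform_quasiconvexity} is a graph test pair over the unit cube in $\asssp \xi_0 = \R^k$, since $f = h$ on $\partial[0,1]^k$. Conversely, a graph test pair in $\mathcal{P}_G(\xi_0)$ has $h = \sum_j Q_j \llbracket a_j + L_j x\rrbracket$, but the pairs $(a_j, L_j)$ need not be distinct. I would first merge coinciding pairs, which only makes the decomposition coarser. Definition~\ref{def:uniform_quasiconvexity} additionally asks $f$ to split as $\sum_j f_j$ with $f_j$ agreeing with the $j$-th sheet on the boundary. Where the $a_j$ are distinct, the affine sheets are separated and a Lipschitz selection argument splits $f$ accordingly. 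In the degenerate cases I would not force the splitting. Instead I would observe that the direction actually needed, $\UPC \Rightarrow \UQC$, does not use the splitting at all: the Jensen argument of Theorem~\ref{thm:main1} applies to any pair with $\partial G_f = \partial G_h$.

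For the second link, the easy direction is $\UPC_{\xi_0}(c) \Rightarrow \AUE(\mathcal{P}_G(\xi_0),c)$. The tangent planes of $G_f$ lie in $\ograss{n}{k}^{+}$. Since $\partial G_f = \partial G_h$, the two currents have the same total $k$-vector
\[
    \int \vec G_f \ud \|G_f\| = \int \vec G_h \ud \|G_h\| .
\]
The Jensen-type inequality for $\Psi - c|\cdot|$ then gives the claim exactly as in the proof of Theorem~\ref{thm:main1}. Here $G_h$ has several affine sheets, so I would apply the inequality with $\eta_0$ replaced by the sum over those sheets, or treat each affine sheet of $h$ against the combination of pieces of $f$ matching its $k$-vector. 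The converse direction runs the contradiction argument of Theorem~\ref{thm:main2} with $\eta_0 = \xi_0$ fixed. The comparison uses $h = Q\llbracket 0 \rrbracket$, which is an admissible test in $\UQC(c;Q)$ with $J = 1$. This yields $\UPC_{\xi_0}(c_2)$ for every $c_2 < c$, hence $\UPC_{\xi_0}(c)$ by Remark~\ref{lemma:plane_wise_upc}(i).

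The main obstacle is the converse direction just described. Lemma~\ref{lemma:approximate} perturbs $\eta_0$ to a rational $\tilde\eta_0$, so the comparison cube no longer lies in $\asssp \xi_0$. To handle this I would note that $\xi_0 = e_1\wedge\cdots\wedge e_k$ is already rational. I would therefore modify the proof of Lemma~\ref{lemma:approximate} to keep $\tilde\eta_0 = \eta_0 = \xi_0$ and approximate only $\eta_1,\dots,\eta_d$. The residual $r$ would be absorbed into rational, positively oriented $k$-vectors exactly as in that lemma, now using that $\sum_i m_i \tilde\eta_i$ slightly undershoots in the $\xi_0$-direction. For this undershoot I would rescale the weights $m_i$ by a factor slightly below $1$, which keeps the energy error $O(\varepsilon)$. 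Then Theorem~\ref{thm:antonio} and Lemma~\ref{lemma:antonio} produce the $Q$-graph over $[0,1]^k$ directly, with no rotation needed.
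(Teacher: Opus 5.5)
\paragraph{The gap.} Your chain of equivalences is the paper's chain, but the forward direction breaks at the step you call easy. The hypothesis $\Psi\in\UPC_{\xi_0}(c)$ is a Jensen inequality for $G:=\Psi-c|\cdot|$ at the single barycenter $\xi_0$. Definition~\ref{def:uniform_quasiconvexity}, however, allows affine data $h=\sum_jQ_j\llbracket a_j+L_jx\rrbracket$ with arbitrary slopes. For those, the common total $k$-vector of $G_f$ and $G_h$ is $\sum_jQ_j\tbwedge M(L_j)$.

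So your claim that the Jensen argument of Theorem~\ref{thm:main1} applies to any pair with $\partial G_f=\partial G_h$ is false. That argument needs two things: the Jensen hypothesis in the direction of the barycenter, and $E_\Psi(G_h)-c\Mass(G_h)=G(\text{barycenter})$. With only $\UPC_{\xi_0}(c)$, both hold only when all $L_j=0$. Neither of your repairs works:
\begin{itemize}
\item Jensen at the (generally non-simple) sum over sheets is not covered by $\UPC_{\xi_0}(c)$. For $J\ge2$ it would be too weak anyway, since convexity gives $G(\sum_jQ_j\tbwedge M(L_j))\le\sum_jQ_jG(\tbwedge M(L_j))$.
\item Treating sheet $j$ separately needs Jensen at $\tbwedge M(L_j)$, which again does not follow from a hypothesis at $\xi_0$. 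It is also legitimate only because the splitting $f=\sum_jf_j$ is built into the definition. Your Lipschitz-selection claim for distinct $a_j$ is false: for $k=1$, $n-k=2$, two disjoint branches can swap the boundary sheets.
\end{itemize}

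\paragraph{The forward implication is false as stated.} Take $n=2$, $k=1$, $c=1$ and
\[
\psi(t)=2\sqrt{1+t^2}\ \ (|t|\le 1),\qquad \psi(t)=\sqrt2\,(1+|t|)\ \ (|t|\ge 1),\qquad g(t):=\psi(t)-\sqrt{1+t^2}.
\]
Then $\psi$ is convex, hence polyconvex, and $\Psi$ is Lipschitz. Put $\eta_i=(1,t_i)/\sqrt{1+t_i^2}$ and $\lambda_i=m_i/\sqrt{1+t_i^2}$. The condition $\Psi\in\UPC_{\xi_0}(1)$ then reads $\sum_i\lambda_ig(t_i)\ge g(0)$ whenever $\sum_i\lambda_i=1$ and $\sum_i\lambda_it_i=0$. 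This holds because $g\ge 1=g(0)$. However $g''=-(1+t^2)^{-3/2}<0$ on $(1,\infty)$. Take $Q=1$, $h(x)=3x$, and $f$ with slope $2$ on $[0,\tfrac12]$ and slope $4$ on $[\tfrac12,1]$. Then
\[
E_{\bar\psi_1}(f)-E_{\bar\psi_1}(h)-\bigl(\Mass(G_f)-\Mass(G_h)\bigr)=\tfrac12g(2)+\tfrac12g(4)-g(3)<0 .
\]
The paper has the same defect, through the forward half of Remark~\ref{rem:plane_wise_equivalence}. What the sheetwise argument actually needs is that $X\mapsto\psi(X)-c|\tbwedge M(X)|$ be polyconvex.

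\paragraph{The converse direction.} This part is fine in outline. Keeping $\tilde\eta_0=\xi_0$ by shrinking the weights is a real improvement on Lemma~\ref{lemma:approximate}, which perturbs $\eta_0$ and so leaves $\mathcal{P}_G(\xi_0)$. One point is still missing. $\UPC^{\mathrm{cl}}_{\xi_0}(c)$ also requires $\psi$ to be polyconvex, and $\UPC_{\xi_0}(c)$ alone does not give this: alter $\psi$ non-convexly far from $0$ while keeping $g\ge g(0)$. So you must also use the tilted competitors $h=Q\llbracket Lx\rrbracket$.
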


\begin{proof}
    The equivalence follows from a chain of equivalences, each justified by the cited remarks:
    \begin{align*}
        \psi \in \UPC^{\mathrm{cl}}_{\xi_0}(c)
        &\;\overset{\text{\ref{rem:class_geom_poly_equivalence}}}{\Longleftrightarrow}\;
        \Psi \in \UPC_{\xi_0}(c) \\[2mm]
        &\;\overset{\text{\ref{rem:plane_wise_equivalence}}}{\Longleftrightarrow}\;
        \Psi \in \AUE(\mathcal{P}_G(\xi_0), c) \\[1mm]
        &\;\overset{\text{\ref{rem:aue_uqc}}}{\Longleftrightarrow}\;
        \bar{\psi}_Q \in \UQC(c; Q) \quad \text{for every } Q \in \natp.
    \end{align*}
\end{proof}

\subsection*{Acknowledgements}
This research was supported by the \href{https://ncn.gov.pl/}{National Science Centre Poland} 
(grant number 2022/46/E/ST1/00328). The author sincerely thanks Sławomir Kolasiński 
for his invaluable guidance and insightful discussions throughout this work.

{\small
\bibliographystyle{halpha}
\addcontentsline{toc}{section}{\numberline{}References}
\bibliography{multigraphsandellipticity.bib}{}
}

\bigskip

{
\small \noindent
Maciej Lesniak
\\
\texttt{maciej.lesniak0@gmail.com}
}

\end{document}